\nonstopmode \numberwithin{equation}{section}
\newtheorem{thm}{Theorem}[section]
\newtheorem{cor}{Corollary}[section]
\newtheorem{lem}{Lemma}[section]
\theoremstyle{definition}
\newtheorem{example}{Example}[section]
\newcounter{minutes}\setcounter{minutes}{\time}
\newcounter{hours}\setcounter{hours}{\time}
\newcounter {own}
\def\theown {\thesection       .\arabic{own}}
\newcounter{alphabet}
\begin{document}

\title{PRE-SCHWARZIAN AND SCHWARZIAN NORM ESTIMATES FOR HARMONIC FUNCTIONS WITH FIXED ANALYTIC PART}

 \author{Md Firoz Ali*}
\address{Md Firoz Ali,
Department of Mathematics,
National Institute of Technology Durgapur,
West Bengal-713209, India.}
\email{ali.firoz89@gmail.com, firoz.ali@maths.nitdgp.ac.in}

\author{Sushil Pandit}
\address{Sushil Pandit,
Department of Mathematics,
National Institute of Technology Durgapur,
West Bengal-713209, India}
\email{sushilpandit15594@gmail.com}

\subjclass[2010]{Primary 30C55, 30C45}
\keywords{Univalent functions; Harmonic functions; convex functions; pre-Schwarzian norm; Schwarzian norm}

\def\thefootnote{}
\footnotetext{ {\tiny File:~\jobname.tex,
printed: \number\year-\number\month-\number\day,
          \thehours.\ifnum\theminutes<10{0}\fi\theminutes }
} \makeatletter\def\thefootnote{\@arabic\c@footnote}\makeatother

\begin{abstract}
In the present article, we discuss about the estimate of the pre-Schwarzian and Schwarzian norms for locally univalent harmonic functions $f=h+\overline{g}$ in the unit disk $\mathbb{D}:=\{z\in\mathbb{C}:\, |z|<1\}$. In this regard, we first rectify an earlier result of Kanas \emph{et al.} [J. Math. Anal. Appl., {\bf 474}(2) (2019), 931--943] and prove a general result for the pre-Schwarzian norm. We also consider a new class $\mathcal{F}_0$ consisting of all harmonic functions $f=h+\overline{g}$ in the unit disk $\mathbb{D}$ such that ${\rm Re\,}\left(1+z\frac{h''(z)}{h'(z)}\right)>0$ for $z\in\mathbb{D}$  with dilatation $\omega_f(z)\in Aut(\mathbb{D})$ and obtain best possible estimates of the pre-Schwarzian and Schwarzian norms for functions in the class  $\mathcal{F}_0$. Moreover, we obtain the distortion and coefficient estimates of the co-analytic function $g$ when $f=h+\overline{g}\in\mathcal{F}_0$.
\end{abstract}

\thanks{}

\maketitle
\pagestyle{myheadings}
\markboth{Md Firoz Ali, Sushil Pandit}{Pre-Schwarzian and Schwarzian norm estimates}

\section{Introduction}
A twice continuously differentiable complex valued function $f=u+iv$ in a domain $\Omega$ is called harmonic if $u$ and $v$ both are harmonic in $\Omega$ or equivalently if it satisfies the Laplace equation $\Delta f = 4f_{z\overline{z}} = 0.$ In a simply connected domain $\Omega,$ every harmonic mapping $f$ has a canonical representation of the form $f = h+\overline{g},$ where $h$ and $g$ are analytic functions in  $\Omega$ called the analytic and co-analytic part of $f$ respectively. The Jacobian of $f=h+\overline{g}$ is defined by
$ J_f(z)=|f_z|^2 - |f_{\overline{z}}|^2=|h'(z)|^2-|g'(z)|^2.$
%
The harmonic mapping $f$ is called orientation preserving or sense preserving mapping if $J_f(z)>0$ and is called orientation reversing or sense reversing mapping if $J_f(z)<0$. For a sense preserving harmonic mapping $f=h+\overline{g}$, the dilatation $\omega_f = g'/h'$ has the property that $|\omega_f(z)|<1$ in $\Omega.$ According to Lewy's theorem \cite{Lewy-1936}, a harmonic mapping $f = h+\overline{g}$ is locally univalent in a domain  $\Omega$ if its Jacobian $J_f\ne 0$. \\

Let $\mathcal{A}$ denote the class of all analytic functions $h$ normalized by $h(0)=h'(0)-1=0$ in the unit disk $\mathbb{D}:=\{z\in\mathbb{C}:|z|<1\}$ and $\mathcal{S}$ denote the class of all univalent functions in $\mathcal{A}$. Let $\mathcal{H}$ denote the class of all harmonic mappings $f = h+\overline{g}$ in the unit disk $\mathbb{D}$ with the normalization $h(0) = h'(0)-1 = g(0) = 0$. Hence, a function $f=h+\overline{g}$ in $\mathcal{H}$ has the the form
\begin{align}\label{p1-001}
h(z)=z+\sum\limits_{n=2}^\infty a_nz^n ~~\quad\text{and}~~\quad g(z)=\sum\limits_{n=1}^\infty b_nz^n.
\end{align}
Let $\mathcal{S_H}$ be the subclass of $\mathcal{H}$ of all sense preserving and univalent harmonic mappings. The class $\mathcal{S_H}$ is not compact but the class $\mathcal{S}^{0}_\mathcal{H} = \{f\in \mathcal{S_H} : g'(0)=0\}$ is compact. A univalent harmonic mapping $f = h+\overline{g}$ is called convex if the image $f(\mathbb{D})$ is a convex domain. Let $\mathcal{K_H}$ denote the subclass of $\mathcal{S_H}$ of all convex harmonic mappings and $\mathcal{K}^{0}_\mathcal{H} = \{f\in \mathcal{K_H} : g'(0)=0\}.$ A domain $\Omega$ is close-to-convex if $\mathbb{C}\setminus\Omega$ can be represented as union of non-intersecting half-lines. A harmonic mapping $f = h+\overline{g}$ is called close-to-convex in $\mathbb{D}$ if the image $f(\mathbb{D})$ is a close-to-convex domain. Suppose $\mathcal{C_H}$ denote the subclass of $\mathcal{S_H}$ of all close-to-convex harmonic mappings in the unit disk $\mathbb{D}$ and $\mathcal{C}^{0}_\mathcal{H}= \{f=h+\overline{g}\in \mathcal{C_H} : g'(0)=0\}$. For more interesting facts and results on planar harmonic univalent mappings, we refer to \cite{Duren-2004}.

\section{Pre-Schwarzian and Schwarzian norm}
For a locally univalent analytic function $f$ defined in a simply connected domain $\Omega$, the pre-Schwarzian derivative $P_f$ and the Schwarzian derivative $S_f$ are defined as
\begin{align}\label{p1-005}
P_f(z)=\frac{f''(z)}{f'(z)}\quad\text{and}\quad S_f(z) = P_f'(z)-\frac{1}{2}P_f^2(z)=\frac{f'''(z)}{f''(z)}-\frac{3}{2}\left(\frac{f''(z)}{f'(z)}\right)^2
\end{align}
respectively. Moreover, the pre-Schwarzian norm and the Schwarzian norm of $f$ are defined by
\begin{align}\label{p1-010}
||P_f|| = \sup_{z \in \mathbb{D}}(1-|z|^2)|P_f(z)|\quad\text{and}\quad ||S_f|| = \sup_{z \in \mathbb{D}}(1-|z|^2)^2|S_f(z)|
\end{align}
respectively. Several important global univalence criteria for a locally univalent analytic function $f$ were obtained using the notions of pre-Schwarzian and Schwarzian derivatives of $f$. For a univalent function $f$, it is well known that $||P_f||\leq 6$ and $||S_f||\leq 6$ (see \cite{Kruas-1932}) and these estimates are best possible. On the other hand, for a locally univalent function $f$ in $\mathcal{A}$, it is also known that if $||P_f||\leq 1$ (see \cite{Becker-1972}, \cite{Becker-Pommerenke-1984}) or $||S_f||\leq 2$ (see \cite{Nehari-1949}), then the function $f$ is univalent in $\mathbb{D}$. In 1976, Yamashita \cite{Yamashita-1976} proved that $||P_f ||$ is finite if and only if $f$ is uniformly locally univalent in $\mathbb{D},$ that is, there exists a constant $\rho>0$ such that $f$ is univalent on the hyperbolic disk $|(z-a)/(1-\overline{a}z)|<\tanh\rho$ of radius $\rho$ for every $a\in\mathbb{D}.$\\

In 2003, Chuaqui et al. \cite{Chuaqui-Duren-Osgood-2003} defined Schwarzian derivative (which we denote by $\mathbb{S}_f$) for a locally univalent and sense preserving harmonic mapping $f = h+\overline{g}$ with its dilatation $\omega = g'/h'$ is of the form $\omega = q^2$ for some analytic function $q$, by the formula
\begin{align}\label{p1-015}
\mathbb{S}_f
&= 2[(\log\lambda)_{zz}-((\log\lambda)_z)^2],~~\text{where}~\lambda = |h'|+|g'|\\
&= S_h+\frac{2\overline{q}}{1+|q|^2}\left(q''-\frac{h''}{h'}q'\right)-4\left(\frac{q'\overline{q}}{1+|q|^2}\right)^2.\nonumber
\end{align}
The pre-Schwarzian derivative (which we denote by $\mathbb{P}_f$) for a locally univalent and sense preserving harmonic mapping $f = h+\overline{g}$ with its dilatation $\omega = g'/h'$ is of the form $\omega = q^2$ for some analytic function $q$, first proposed by Kanas and Klimek-Sm\c{e}t \cite{Kanas-Smet-2014}, is defined as
\begin{align}\label{p1-020}
\mathbb{P}_f = \frac{2\partial(\log\lambda)}{\partial z}= \frac{h''}{h'}+\frac{2\overline{q}q'}{1+|q|^2},~~\text{where}~\lambda = |h'|+|g'|.
\end{align}
The Schwarzian and pre-Schwarzian derivatives defined in \eqref{p1-015} and \eqref{p1-020} respectively have one disadvantage that comes from the fact that the dilatation has restriction. Due to this in many cases one can not define it globally  for an univalent harmonic mapping.
In 2015, Hern{\'a}ndez and Mart{\'\i}n \cite{Hernandez-Martin-2015} defined the Schwarzian derivative of a locally univalent harmonic mapping $f=h+\overline{g}$ by\\
\begin{align}\label{p1-025}
S_f &= \left(\log J_f\right)_{zz}-\frac{1}{2}\left(\log J_f\right)_z^2\\
&= S_h+\frac{\overline{\omega}}{1-|\omega|^2}\left(\frac{h''}{h'}\omega'-\omega''\right)-\frac{3}{2}\left(\frac{\omega'\overline{\omega}}{1-|\omega|^2}\right)^2,\nonumber
\end{align}
where $J_f$ is the Jacobian of $f$ and $S_h$ is the classical Schwarzian derivative of the analytic function $h$ and $\omega$ is the dilatation of $f.$ The pre-Schwarzian derivative of $f=h+\overline{g}$ is defined as
\begin{align}\label{p1-027}
P_f = \left(\log J_f\right)_z = \frac{h''}{h'}-\frac{\overline{\omega}\omega'}{1-|\omega|^2}.
\end{align}
This is a generalization of the classical pre-Schwarzian derivative and Schwarzian derivative of an analytic function. Note that when $f$ is analytic, the dilatation $\omega=0.$ It is also easy to see that $S_f = (P_f)_z-\frac{1}{2}(P_f)^2.$ The pre-Schwarzian and Schwarzian derivatives of harmonic function have the chain rule property exactly in the same form as in the analytic case. If $f$ is a sense preserving harmonic function and $\varphi$ is a locally univalent analytic function for which the composition $f\circ \varphi$ is defined, then
\begin{align*}
P_{f\circ \varphi}(z) &= P_f\circ \varphi(z)\cdot \varphi'(z)+P_{\varphi}(z),\\
S_{f\circ \varphi}(z) &= S_f\circ \varphi(z)\cdot (\varphi'(z))^2+S_{\varphi}(z).
\end{align*}
Both pre-Schwarzian and Schwarzian derivatives are invariant under affine transformation of a harmonic function $f$ i.e., if $A(w) = aw +b\overline{w} +c$, $|a| > |b|$, then
$$
P_{A\circ f} \equiv P_f ~~\text{and}~~ S_{A\circ f} \equiv S_f.
$$

As in the case of analytic functions, the pre-Schwarzian norm $||P_f||$ and the Schwarzian norm $||S_f||$ of a sense-preserving locally univalent harmonic mapping $f = h+\overline{g}$ in the unit disk $\mathbb{D}$ are defined by \eqref{p1-010}. Hern{\'a}ndez and Mart{\'\i}n \cite{Hernandez-Martin-2015} proved that a sense-preserving harmonic mappings is uniformly locally univalent if and only if its Schwarzian norm is finite.
Later, Liu and Ponnusamy \cite{Liu-Ponnusamy-2018} proved that a sense-preserving harmonic mapping is uniformly locally univalent if and only if its pre-Schwarzian norm is finite.

\section{A note on Pre-Schwarzian norm}
Analytic parts of harmonic mappings play an important role to model their geometric properties. For instance, if $f=h+\overline{g}\in\mathcal{H}$ is a sense preserving harmonic mapping and $h$ is analytic convex univalent, then $f\in\mathcal{S_H}$ and maps $\mathbb{D}$ onto a close-to-convex domain \cite{Clunie-Sheil-1984}. In 2011, Bshouty and Lyzzaik \cite{Bshouty-Lyzzaik-2011} proved that if $f=h+\overline{g}\in\mathcal{H}$ is a harmonic mapping with dilatation $\omega=z$ and the analytic part $h \in \mathcal{A}$ satisfies ${\rm Re\,}Q_h(z)>-\frac{1}{2}$ for $z\in\mathbb{D}$, where
\begin{equation}\label{p1-028}
Q_h(z):=1+\frac{zh''(z)}{h'(z)}
\end{equation}
then $f$ is an univalent close-to-convex function. Throughout this article, we will use the notation $Q_h(z)$ defined in (\ref{p1-028}) for a function $h$ in $\mathcal{A}$.\\

In 2014, Kanas and Klimek-Sm\c{e}t \cite{Kanas-Smet-2014} considered two subclasses $\mathcal{S}^{\alpha}_{\mathcal{H}}$ and $\mathcal{V}^{\mathcal{H}}(k)$ of $\mathcal{H}$ of harmonic mappings $f = h+\overline{g}$ such that $|g'(0)| = \alpha\in [0,1)$ and $g'(z) = \omega(z)h'(z)$ where $h \in \mathcal{A}$ satisfies certain analytic criteria and obtain estimate of the pre-Scwarzian norm $||\mathbb{P}_f||$ in terms of $\alpha$.
Here we would like to point out that the expression for the Schwarzian derivative $\mathbb{S}_f$ and pre-Schwarzian derivative $\mathbb{P}_f$ in \cite[Eqns. (3.4) and (3.5)]{Kanas-Smet-2014} are not correct for a harmonic mapping $f = h+\overline{g} \in \mathcal{H}$ with the dilatation $\omega=g'/h'$, but these expression are correct when the dilatation is $q=\omega^2=g'/h'$ where $\omega:\mathbb{D}\to \mathbb{D}$ is an analytic function. Consequently, if $q=\omega^2=g'/h'$ is the dilatation of $f = h+\overline{g}\in\mathcal{H}$ with $|g'(0)| = \alpha\in [0,1)$ then $\omega(0)=\sqrt{\alpha}$. Due to this normalization, the estimate of the pre-Schwarzian norm $||\mathbb{P}_f||$ for the classes $\mathcal{S}^{\alpha}_{\mathcal{H}}$ and $\mathcal{V}^{\mathcal{H}}(k)$ obtained in \cite[Theorem 3.3 and 3.4]{Kanas-Smet-2014} are not correct.\\

In 2019, Kanas et al. \cite{Kanas-Maharana-Prajapat-2019} introduced the class $\mathcal{G}_{Har}^\alpha$ for $\alpha\in[0,1)$, consisting of all harmonic mappings $f = h+\overline{g} \in \mathcal{H}$, such that $g'(0) = \alpha\in [0,1)$ and $g'(z) = \omega(z)h'(z)$ where the dilatation $\omega$ is a bilinear transformation and $h \in \mathcal{G}$ with
$$\mathcal{G}=\{h\in\mathcal{A}:-1/2<{\rm Re\,}Q_h(z)<3/2 ~\text{for}~ z\in\mathbb{D}\}.$$
Kanas et al. \cite{Kanas-Maharana-Prajapat-2019} obtained estimate of the pre-Scwarzian norm $||\mathbb{P}_f||$ in terms of $\alpha$ for the class $\mathcal{G}_{Har}^\alpha$. But due to the normalization mistake in dilatation $\omega$ as mentioned above, the estimate of $||\mathbb{P}_f||$ obtained in \cite[Theorem 2.1]{Kanas-Maharana-Prajapat-2019} is not correct. We also note that Lemmas 1.1 and 1.2 in \cite{Kanas-Maharana-Prajapat-2019} may not be true for the class $\mathcal{G}$, as these results holds only for functions $h \in \mathcal{A}$ with ${\rm Re\,}Q_h(z)<\frac{3}{2}$ for $z\in\mathbb{D}$ (see \cite[Proposition 1]{Maharan-Prajapat-Srivastava-2017}, \cite[Theorem 1, Corollary 2]{Obradovic-Ponnusamy-Wirths-2013}). As the Lemma 1.1 of \cite{Kanas-Maharana-Prajapat-2019} has been used to obtain the estimate of $||\mathbb{P}_f||$ for the class $\mathcal{G}_{Har}^\alpha$, the estimate is not correct. Moreover, in the proof of \cite[Theorem 2.1]{Kanas-Maharana-Prajapat-2019}, the expression of $G(r)$ is not accurate which also implies that the estimate is not correct.\\

In 2020, Prajapat et al. \cite{Prajapat-Manivannan-Maharana-2020} studied the class $\mathcal{F_H}(\alpha)$ for $0\le \alpha<1$, consisting of all harmonic functions $f = h +\overline{g}\in \mathcal{H}$ such that $|g'(0)| = \alpha$ and $h \in \mathcal{A}$ satisfies ${\rm Re\,}Q_h(z)>-\frac{1}{2}$ for $z\in\mathbb{D}$. In 2021, Rajbala and Prajapat \cite{Rajbala-Prajapat-2021} introduced the class $G_{\mathcal{H}}(\alpha, \beta)$ for $ 0 \leq \alpha < 1$ and $2/3 < \beta \leq 1$, consisting of all harmonic mappings $f = h+\overline{g} \in \mathcal{H}$ such that $|g'(0)|=\alpha$ and $h \in \mathcal{A}$ satisfies ${\rm Re\,}Q_h(z)<\frac{3}{2}\beta$ for $z\in\mathbb{D}$.
Estimate of the pre-Schwarzian norms $||\mathbb{P}_f||$ and $||P_f||$ for the classes $\mathcal{F_H}(\alpha)$ and $G_{\mathcal{H}}(\alpha,\beta)$ has been obtained by Prajapat et al. \cite{Prajapat-Manivannan-Maharana-2020} and Rajbala and Prajapat \cite{Rajbala-Prajapat-2021} respectively.
Although, the flaw in the normalization of the dilatation has been rectified in \cite[Theorem 1]{Prajapat-Manivannan-Maharana-2020} and \cite[Theorem 2]{Rajbala-Prajapat-2021}, the estimate of $||\mathbb{P}_f||$ for the classes $\mathcal{F_H}(\alpha)$ and $G_{\mathcal{H}}(\alpha,\beta)$ are not sharp, because the norm $||\mathbb{P}_f||$ of the extremal functions (as claimed) obtained in \cite[Theorem 1]{Prajapat-Manivannan-Maharana-2020} and \cite[Theorem 2]{Rajbala-Prajapat-2021} are not correct.\\

As rectifying all these flawed results separately under the different assumptions on the analytic part $h$ is a tedious job, we proved the following general result.

\begin{thm}\label{p1-029}
Let $f=h+\overline{g}\in\mathcal{H}$ be a sense-preserving harmonic mapping with dilatation $q(z)=\omega^2(z)=g'(z)/h'(z)$ where $\omega:\mathbb{D}\rightarrow\mathbb{D}$ is an analytic function. Then either $||\mathbb{P}_f||=||P_h||=\infty$ or both $||\mathbb{P}_f||$ and $||P_h||$ are finite. If $||\mathbb{P}_f||<\infty$ then
\begin{align*}
\Big|||\mathbb{P}_f||-||P_h||\Big|\leq \frac{2r_0(1-r_0^2)}{1+r_0^2}\approx 0.6005...,
\end{align*}
where $r_0=\sqrt{\sqrt{5}-2}$ and the estimate is sharp.
\end{thm}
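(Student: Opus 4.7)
The plan is to exploit the identity
\[
\mathbb{P}_f(z)-P_h(z)=\frac{2\overline{\omega(z)}\,\omega'(z)}{1+|\omega(z)|^2},
\]
which follows immediately from the definition \eqref{p1-020} applied with the auxiliary analytic function $\omega$ (where $q=\omega^{2}=g'/h'$) together with $P_h(z)=h''(z)/h'(z)$. Hence, for every $z\in\mathbb{D}$,
\[
(1-|z|^{2})\bigl|\mathbb{P}_f(z)-P_h(z)\bigr|=\frac{2|\omega(z)|\,(1-|z|^{2})|\omega'(z)|}{1+|\omega(z)|^{2}}.
\]
Since $\omega:\mathbb{D}\to\mathbb{D}$ is analytic, the Schwarz--Pick inequality gives $(1-|z|^{2})|\omega'(z)|\le 1-|\omega(z)|^{2}$, so with $t=|\omega(z)|\in[0,1)$ the right-hand side is dominated by $\psi(t):=2t(1-t^{2})/(1+t^{2})$.

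Next I would carry out a one-variable calculus step: computing $\psi'(t)$ and setting it to zero leads to $t^{4}+4t^{2}-1=0$, whose unique root in $(0,1)$ is $t=r_{0}:=\sqrt{\sqrt{5}-2}$. Thus
\[
\sup_{z\in\mathbb{D}}(1-|z|^{2})\bigl|\mathbb{P}_f(z)-P_h(z)\bigr|\le \psi(r_{0})=\frac{2r_{0}(1-r_{0}^{2})}{1+r_{0}^{2}}.
\]
The triangle inequality applied to the pointwise expression $(1-|z|^{2})|\mathbb{P}_f(z)|$ and $(1-|z|^{2})|P_h(z)|$, followed by taking suprema, yields
\[
\bigl|\,\|\mathbb{P}_f\|-\|P_h\|\,\bigr|\le \|\mathbb{P}_f-P_h\|\le \frac{2r_{0}(1-r_{0}^{2})}{1+r_{0}^{2}},
\]
which is exactly the claimed bound. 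In particular, since the right-hand side is finite, $\|\mathbb{P}_f\|<\infty$ iff $\|P_h\|<\infty$, establishing the dichotomy.

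To demonstrate sharpness, I would exhibit an explicit extremal pair. Taking $h(z)=z$ (so that $P_h\equiv 0$ and $\|P_h\|=0$) and $\omega(z)=z$, we may set $g(z)=\int_{0}^{z}\zeta^{2}\,d\zeta=z^{3}/3$; then $f=h+\overline{g}=z+\overline{z^{3}/3}$ is a sense-preserving harmonic mapping in $\mathcal{H}$ with dilatation $q=z^{2}$, and
\[
(1-|z|^{2})|\mathbb{P}_f(z)|=\frac{2|z|(1-|z|^{2})}{1+|z|^{2}}=\psi(|z|).
\]
Both equalities in the Schwarz--Pick step and the maximization step are attained at $|z|=r_{0}$, so $\|\mathbb{P}_f\|=\psi(r_{0})=2r_{0}(1-r_{0}^{2})/(1+r_{0}^{2})$, and the difference $\|\mathbb{P}_f\|-\|P_h\|$ equals the asserted bound.

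I expect the only real obstacle to be the one-variable optimization of $\psi(t)$, which is routine; identifying the correct auxiliary inequality (Schwarz--Pick in the form $(1-|z|^{2})|\omega'(z)|\le 1-|\omega(z)|^{2}$) and recognizing that the dilatation in \eqref{p1-020} is the \emph{square} of the function appearing in that formula (which is precisely the point the authors use to correct the earlier literature) is the conceptual step that makes everything fit together.
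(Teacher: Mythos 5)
Your proposal is correct and follows essentially the same route as the paper: the same decomposition $\mathbb{P}_f-P_h=2\overline{\omega}\omega'/(1+|\omega|^2)$, the same Schwarz--Pick estimate reducing to the one-variable maximization of $2t(1-t^2)/(1+t^2)$ at $t=r_0=\sqrt{\sqrt{5}-2}$, and the identical extremal example $f=z+\overline{z^3/3}$ with $\omega(z)=z$. No gaps.
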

Proof of this theorem will be given in Section \ref{Proof of Main Results}. A similar result for the pre-Schwarzian norm $||P_f||$ has been proved by Liu and Ponnusamy \cite{Liu-Ponnusamy-2018}. The next three results follows easily from Theorem \ref{p1-029}.

\begin{cor}\label{p1-030}
Let $f=h+\overline{g}\in\mathcal{H}$ be a sense-preserving harmonic mapping with dilatation $q(z)=\omega^2(z)=g'(z)/h'(z)$ where $\omega:\mathbb{D}\rightarrow\mathbb{D}$ is an analytic function and $h$ is such that ${\rm Re\,}Q_h(z)>-\frac{1}{2}$ for $z\in\mathbb{D}.$ Then
$$||\mathbb{P}_f||\leq 6+\frac{2r_0(1-r_0^2)}{1+r_0^2}\approx 6.6005\ldots,~~\text{where}~~r_0=\sqrt{\sqrt{5}-2}.$$
\end{cor}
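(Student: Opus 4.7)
The proof plan is straightforward and has two independent ingredients that are then combined via Theorem \ref{p1-029}. First, I would establish that $\|P_h\|\leq 6$. The condition $\mathrm{Re\,}Q_h(z)>-\tfrac{1}{2}$ is precisely Ozaki's close-to-convexity criterion, so $h\in\mathcal{A}$ is close-to-convex and in particular univalent in $\mathbb{D}$. Hence the classical pre-Schwarzian estimate quoted in Section~2 (for any $f\in\mathcal{S}$ one has $\|P_f\|\leq 6$) gives $\|P_h\|\leq 6$; in particular $\|P_h\|<\infty$.

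Second, I would invoke Theorem \ref{p1-029}. Since $\|P_h\|<\infty$, the dichotomy in that theorem forces $\|\mathbb{P}_f\|<\infty$ as well, and we obtain the quantitative inequality
\begin{align*}
\|\mathbb{P}_f\|\;\leq\;\|P_h\|+\frac{2r_0(1-r_0^2)}{1+r_0^2}.
\end{align*}
Plugging in $\|P_h\|\leq 6$ yields the stated bound $\|\mathbb{P}_f\|\leq 6+\frac{2r_0(1-r_0^2)}{1+r_0^2}$ with $r_0=\sqrt{\sqrt{5}-2}$.

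There is no real obstacle here: both inputs are already available. The only subtle point is making sure Ozaki's criterion applies under the stated hypothesis ${\rm Re\,}Q_h(z)>-\tfrac{1}{2}$, which is exactly the hypothesis used by Bshouty and Lyzzaik (as recalled in the paragraph around equation \eqref{p1-028}) to guarantee close-to-convexity of $h$, and hence its univalence. The entire argument is therefore a one-line citation of Ozaki/Bshouty--Lyzzaik together with the classical univalent bound $\|P_h\|\leq 6$, combined with Theorem \ref{p1-029}.
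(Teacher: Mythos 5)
Your proposal is correct and follows the same overall scheme as the paper: bound $\|P_h\|$ by $6$, then transfer to $\|\mathbb{P}_f\|$ via Theorem \ref{p1-029}. The only genuine difference is the key lemma used in the first step. The paper does not pass through univalence at all: it invokes the class-specific pointwise estimate (from Ponnusamy, Sahoo and Yanagihara) that ${\rm Re\,}Q_h(z)>-\tfrac12$ implies $\left|zh''(z)/h'(z)\right|\leq 3r/(1-r)$ for $|z|=r$, whence $(1-|z|^2)|h''(z)/h'(z)|\leq 3(1+r)\leq 6$. You instead deduce univalence of $h$ from Ozaki's close-to-convexity criterion (the same fact underlying the Bshouty--Lyzzaik result recalled near \eqref{p1-028}) and then apply the classical Kraus bound $\|P_h\|\leq 6$ for univalent functions, which the paper records in Section 2. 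Both routes are one-line citations yielding the identical constant $6$, so nothing is lost either way; the paper's route is marginally more self-contained for this class (it needs only a distortion estimate, not a univalence theorem plus the sharp bound for all of $\mathcal{S}$), while yours has the small virtue of making explicit that $h$ is univalent under the hypothesis. The second half of your argument --- using the dichotomy in Theorem \ref{p1-029} to conclude $\|\mathbb{P}_f\|<\infty$ and then applying the quantitative inequality --- is exactly what the paper does.
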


Here, we note that every function $f=h+\overline{g}$ in the class $\mathcal{F_H}(\alpha)$, $0\le \alpha<1$ with dilatation $q(z)=\omega^2(z)=g'(z)/h'(z)$ satisfy the hypothesis of Corollary \ref{p1-030}. Therefore the estimate of the pre-Schwarzian norm $||\mathbb{P}_f||$ obtained in Corollary \ref{p1-030} also holds for functions in $\mathcal{F_H}(\alpha).$ Moreover, this estimate is better than the estimate obtained in \cite{Prajapat-Manivannan-Maharana-2020}.

\begin{cor}\label{p1-031}
Let $f=h+\overline{g}\in\mathcal{H}$ be a sense-preserving harmonic mapping with dilatation $q(z)=\omega^2(z)=g'(z)/h'(z)$ where $\omega:\mathbb{D}\rightarrow\mathbb{D}$ is an analytic function and $h$ is such that ${\rm Re\,}Q_h(z)<\frac{3}{2}$ for $z\in\mathbb{D}.$ Then
$$||\mathbb{P}_f||\leq 2+\frac{2r_0(1-r_0^2)}{1+r_0^2}\approx 2.6005\ldots,~~\text{where}~~r_0=\sqrt{\sqrt{5}-2}.$$
\end{cor}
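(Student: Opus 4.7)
The plan is to invoke Theorem \ref{p1-029} after first establishing a sharp bound on the pre-Schwarzian norm $||P_h||$ of the analytic part $h$ under the hypothesis ${\rm Re\,}Q_h(z)<3/2$.

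The key step is to recognise that the one-sided constraint ${\rm Re\,}Q_h<3/2$ is a disguised Carath\'eodory condition. I would set $\phi(z):=3-2Q_h(z)$; then $\phi$ is analytic in $\mathbb{D}$, $\phi(0)=3-2Q_h(0)=1$, and ${\rm Re\,}\phi(z)=3-2{\rm Re\,}Q_h(z)>0$, so $\phi$ belongs to the Carath\'eodory class. The classical sharp bound
$$
|\phi(z)-1|\leq \frac{2|z|}{1-|z|},
$$
which follows by applying Schwarz's lemma to $(\phi-1)/(\phi+1)$ together with the growth bound $|\phi(z)|\leq (1+|z|)/(1-|z|)$, then yields
$$
|Q_h(z)-1|=\tfrac{1}{2}|\phi(z)-1|\leq \frac{|z|}{1-|z|}.
$$
Since $Q_h(z)-1=zh''(z)/h'(z)=zP_h(z)$, this gives $|P_h(z)|\leq 1/(1-|z|)$ for $z\in\mathbb{D}\setminus\{0\}$ (and the bound extends to $z=0$ by continuity), so
$$
(1-|z|^2)|P_h(z)|\leq 1+|z|\leq 2\quad\text{for all } z\in\mathbb{D},
$$
and therefore $||P_h||\leq 2$.

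The corollary then follows immediately from Theorem \ref{p1-029}: since $||P_h||$ is finite, so is $||\mathbb{P}_f||$, and
$$
||\mathbb{P}_f||\leq ||P_h||+\frac{2r_0(1-r_0^2)}{1+r_0^2}\leq 2+\frac{2r_0(1-r_0^2)}{1+r_0^2},
$$
which is the claim. There is no real obstacle here: once the affine substitution $w\mapsto 3-2w$ exposes the Carath\'eodory structure of the hypothesis, the bound $||P_h||\leq 2$ is a standard application of the growth estimate for functions with positive real part, and the conclusion is a one-line invocation of Theorem \ref{p1-029}.
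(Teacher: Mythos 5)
Your proof is correct and follows essentially the same route as the paper: bound $\|P_h\|\leq 2$ via the estimate $|zh''(z)/h'(z)|\leq |z|/(1-|z|)$ and then invoke Theorem \ref{p1-029}. The only difference is that the paper simply cites \cite{Maharan-Prajapat-Srivastava-2017} for that estimate, whereas you derive it directly from the Carath\'eodory-class bound applied to $3-2Q_h$, which is a valid and self-contained substitute.
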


Again, we note that every function $f=h+\overline{g}$ in the class  $G_{\mathcal{H}}(\alpha, \beta)$, $0 \leq \alpha < 1,~2/3 < \beta \leq 1$ with dilatation $q(z)=\omega^2(z)=g'(z)/h'(z)$ satisfy the hypothesis of Corollary \ref{p1-031}. Therefore the estimate of the pre-Schwarzian norm $||\mathbb{P}_f||$ obtained in Corollary \ref{p1-031} also holds for functions in $G_{\mathcal{H}}(\alpha, \beta).$ Moreover, this estimate is better than the estimate obtained in \cite{Rajbala-Prajapat-2021}.

\begin{cor}\label{p1-033}
Let $f=h+\overline{g}\in\mathcal{H}$ be a sense-preserving harmonic mapping with dilatation $q(z)=\omega^2(z)=g'(z)/h'(z)$ where $\omega:\mathbb{D}\rightarrow\mathbb{D}$ is an analytic function and $h$ is such that $-\frac{1}{2}<{\rm Re\,}Q_h(z)<\frac{3}{2}$ for $z\in\mathbb{D}.$ Then
$$||\mathbb{P}_f||\leq 2+\frac{2r_0(1-r_0^2)}{1+r_0^2}\approx 2.6005\ldots,~~\text{where}~~r_0=\sqrt{\sqrt{5}-2}.$$
\end{cor}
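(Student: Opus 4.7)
The plan is to deduce the estimate directly from Theorem \ref{p1-029} combined with a sharp bound on the classical pre-Schwarzian norm $||P_h||$ of the analytic part. Since Theorem \ref{p1-029} supplies
$$||\mathbb{P}_f|| \leq ||P_h|| + \frac{2r_0(1-r_0^2)}{1+r_0^2},$$
everything reduces to showing that $||P_h|| \leq 2$ whenever $h\in\mathcal{A}$ satisfies ${\rm Re\,}Q_h(z) < 3/2$ in $\mathbb{D}$.

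For this, I would introduce the auxiliary function $P(z) := 3 - 2Q_h(z) = 1 - 2zh''(z)/h'(z)$. The upper bound in the hypothesis forces ${\rm Re\,}P(z) > 0$ in $\mathbb{D}$, and clearly $P(0) = 1$. The classical sharp disk estimate for Carath\'eodory functions then gives
$$\left|P(z) - 1\right| \leq \frac{2|z|}{1-|z|},$$
which translates to $|zh''(z)/h'(z)| \leq |z|/(1-|z|)$, and therefore $(1-|z|^2)|h''(z)/h'(z)| \leq 1+|z| \leq 2$ for every $z\in\mathbb{D}$. Passing to the supremum yields $||P_h||\leq 2$, and substituting into the inequality above produces the claimed estimate.

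I do not anticipate any real obstacle. The lower bound $-1/2 < {\rm Re\,}Q_h(z)$ in the hypothesis plays no role in the norm estimate itself; it only guarantees that $h$ is univalent (of close-to-convex type by \cite{Bshouty-Lyzzaik-2011}), so that the sense-preserving harmonic setup of Theorem \ref{p1-029} is consistent. In effect, Corollary \ref{p1-033} is Corollary \ref{p1-031} restated under a strictly stronger hypothesis on the analytic part, and the bound remains unchanged because only the upper bound on ${\rm Re\,}Q_h$ is what drives the estimate $||P_h||\leq 2$.
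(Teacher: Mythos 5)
Your proposal is correct and follows essentially the same route as the paper: the paper proves this corollary by observing that $h$ satisfies the hypotheses of Corollaries \ref{p1-030} and \ref{p1-031} simultaneously and taking the smaller bound, where Corollary \ref{p1-031} is exactly your argument --- only the upper bound ${\rm Re\,}Q_h<3/2$ is used to get $\|P_h\|\le 2$, which is then combined with Theorem \ref{p1-029}. The only difference is that you derive the estimate $|zh''(z)/h'(z)|\le |z|/(1-|z|)$ from scratch via the Carath\'eodory disk inequality, whereas the paper simply cites it from \cite{Maharan-Prajapat-Srivastava-2017}; your derivation is correct and makes the step self-contained.
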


Again, we note that every function $f$ in the class $\mathcal{G}_{Har}^\alpha$ with dilatation $q(z)=\omega^2(z)=g'(z)/h'(z)$ satisfy the hypothesis of Corollary \ref{p1-033}. Therefore the estimate of pre-Schwarzian norm $||\mathbb{P}_f||$ obtained in Corollary \ref{p1-033} also holds for such functions.

\section{The class $\mathcal{F}_0 $}
In this article, we also introduce a new class $\mathcal{F}_0$ of harmonic mappings $f=h+\overline{g}$ in $\mathcal{H}$ such that
$${\rm Re\,}\left(1+\frac{zh''(z)}{h'(z)}\right)>0~\quad\text{for}\quad z\in\mathbb{D}$$
and dilatation $\omega_f=\frac{g'}{h'}\in Aut(\mathbb{D})$, where $Aut(\mathbb{D})$ denote the collection of all automorphisms of $\mathbb{D}$ i.e.,
\begin{align*}
Aut(\mathbb{D})= \left\{e^{i\theta}\frac{z-a}{1-\overline{a}z}:a\in\mathbb{D}~,~\theta\in\mathbb{R}\right\}.
\end{align*}
Clearly, if  $f=h+\overline{g}\in\mathcal{F}_0$ then $h$ is a convex univalent function of the form \eqref{p1-001}.
Thus every $f\in\mathcal{F}_0$ maps the unit disk $\mathbb{D}$ onto a close-to-convex domain \cite{Clunie-Sheil-1984}. As the dilatation $\omega_f\in Aut(\mathbb{D})$, it is of the form
\begin{align*}
\omega_f(z)= e^{i\theta}\frac{z+ \alpha}{1+\overline{\alpha}z}~~\text{for some}~~ \theta\in\mathbb{R}, ~~\alpha\in\mathbb{D}.
\end{align*}
Therefore,
\begin{align*}
\omega_f'(z)= e^{i\theta}\frac{1-|\alpha|^2}{(1+\overline{\alpha}z)^2}\quad\text{and}\quad \omega_f''(z)= -2\overline{\alpha}\frac{(1-|\alpha|^2)}{(1+\overline{\alpha}z)^3}e^{i\theta},
\end{align*}
and so,
\begin{equation*}\label{p1-035}
\omega_f'(0)= e^{i\theta}(1-|\alpha|^2)\quad\text{and}\quad  \omega_f''(0)= -2\overline{\alpha}(1-|\alpha|^2)e^{i\theta}.
\end{equation*}

A family $\mathcal{F}$ of sense-preserving harmonic mappings $f = h + \overline{g}$ in $\mathbb{D},$ normalized by $ h(0) = g(0) = 0$ and $h'(0) = 1$ is said to be linear invariant if it is closed under the  Koebe transform
\begin{align*}
   L_\phi(f)(z) = \frac{f(\phi(z))-f(\phi(0))}{f_z(\phi(0))\phi'(0)},\quad \phi\in Aut(\mathbb{D}),
\end{align*}
and is said to be affine invariant if it is closed under the affine transform
\begin{align*}
A_{\epsilon}(f)(z) = \frac{f(z)-\overline{\epsilon f(z)}}{1-\overline{\epsilon}g'(0)}, \quad |\epsilon|<1.
\end{align*}
Few special examples of affine and linear invariant families are the classes $\mathcal{S_H}$, $\mathcal{K_H}$ and $\mathcal{C_H}$ of sense-preserving univalent, convex and close-to-convex harmonic mappings, respectively in the unit disk. Other examples of affine and linear invariant families of sense-preserving harmonic mappings are the
stable harmonic univalent ($\mathcal{SHU}$) and the stable harmonic convex ($\mathcal{SHC}$) classes. A
function $f = h + \overline{g}\in\mathcal{S_H}$ is $\mathcal{SHU}$ (resp. $\mathcal{SHC}$) if $h +\lambda \overline{g}$ is univalent (convex)
for every $|\lambda| = 1$. These classes are affine and linear invariant because univalence and
convexity are preserved under the Koebe transform and affine transform.\\

The class $\mathcal{F}_0$ is linear invariant but not affine invariant. For, if $\phi\in Aut(\mathbb{D})$, then
\begin{align}\label{p1-040}
F(z):=L_\phi(f)(z) = \frac{f(\phi(z))-f(\phi(0))}{f_z(\phi(0))\phi'(0)}= H(z)+\overline{G(z)},
\end{align}
where
$$H(z)= \frac{h(\phi(z))-h(\phi(0))}{f_z(\phi(0))\phi'(0)}\quad \text{and}\quad G(z)=\overline{\frac{g(\phi(z))-g(\phi(0))}{\overline{f_z(\phi(0))\phi'(0)}}}.$$
 Since $h\circ\phi$ is a convex univalent function in $\mathbb{D},$ it follows that
\begin{equation}\label{p1-045}
{\rm Re\,}\left(1+\frac{zH''(z)}{H'(z)}\right)= {\rm Re\,}\left(1+\frac{z(h\circ\phi)''(z)}{(h\circ\phi)'(z)}\right)>0~~\text{for}~~z\in\mathbb{D}.
\end{equation}
Let $\omega_F(z)$ be the dilatation of $F(z)=L_\phi(f)(z)$. Then
\begin{equation}\label{p1-050}
\begin{split}
\omega_F(z)=\frac{G'(z)}{H'(z)}=\frac{(g\circ\phi)'(z)}{(h\circ\phi)'(z)}\frac{f_z(\phi(0))}{\overline{f_z(\phi(0))}}
 = e^{i\theta_1}(\omega_f\circ\phi)(z)
 = e^{i\theta_2}\frac{z+\alpha}{1+\overline{\alpha}z}
\end{split}
\end{equation}
for some $\alpha\in\mathbb{D}$ and $\theta_1,~~ \theta_2\in\mathbb{R}$. By \eqref{p1-045} and \eqref{p1-050}, we see that $F=L_\phi(f)\in\mathcal{F}_0.$
But, if $f=h+\overline{g}\in\mathcal{F}_0$ with $h(z)=z$ and dilatation $\omega_f(z)=z$, then a simple calculation shows that the affine transformation $$A_\gamma(f)(z)=[f(z)+\gamma\overline{f(z)}]/[1+\gamma g'(0)]$$ for $|\gamma|<1$ is not a member of $\mathcal{F}_0$, which ensure that the family $\mathcal{F}_0$ is not affine invariant.\\

For a convex harmonic mapping $f=h+\overline{g}$, Hern{\'a}ndez and Mart{\'\i}n \cite{Hernandez-Martin-2015} proved that $\|P_f\|\le 5$ and $\|S_f\|\le 6$. Although the first estimate is sharp but it is unknown whether the later estimate is sharp or not. In 2016, Graf \cite{GRAF-2016} proved that
\begin{align}\label{p1-055}
|P_f(z)|\leq~ \frac{2(\alpha_0+|z|)}{1-|z|^2},
\end{align}
for any locally univalent harmonic mapping $f$ in a affine and linear invariant family $\mathcal{F}$ where $\alpha_0 = \sup_{f \in \mathcal{F}^0}|a_2|$ with $\mathcal{F}^0 = \{ f \in \mathcal{F}: g'(0)=0\}.$ The estimate \eqref{p1-055} is sharp for example in affine and linear invariant families $\mathcal{K}_H$ and $\mathcal{C}_H$ of univalent convex and close-to-convex harmonic functions respectively. Estimates of the Schwarzian norm $\|S_f\|$ for stable harmonic univalent ($\mathcal{SHU}$) and the stable harmonic convex ($\mathcal{SHC}$) functions (which are affine and linear invariant families) are obtained by Chuaqui et al. \cite{Chuaqui-Hernandez-Martin-2017}. On the other hand, sharp estimates of the pre-Schwarzian norm $\|P_f\|$ for $\mathcal{SHU}$ and $\mathcal{SHC}$ functions are obtained by Liu and Ponnusamy \cite{Liu-Ponnusamy-2018}.\\

Motivated by these facts, we consider the linear invariant family $\mathcal{F}_0$ and obtain best possible estimate of pre-Schwarzian and Schwarzian norms for the functions in the class $\mathcal{F}_0$.

\begin{thm}\label{p1-057}
If $f=h+\overline{g}\in \mathcal{F}_0$ is of the form \eqref{p1-001} then $||P_f||\leq5$. The estimate is best possible.
\end{thm}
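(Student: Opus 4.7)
The strategy is to decompose the pre-Schwarzian via \eqref{p1-027} as
\[
P_f(z)=\frac{h''(z)}{h'(z)}-\frac{\overline{\omega(z)}\,\omega'(z)}{1-|\omega(z)|^2},
\]
estimate each summand separately, and combine by the triangle inequality. First I would establish $||P_h||\le 4$. The hypothesis ${\rm Re\,}(1+zh''/h')>0$ is exactly analytic convexity of $h$, so writing $p(z)=1+zh''(z)/h'(z)$, the function $(p-1)/(p+1)$ is a Schwarz function and the Schwarz lemma gives $|p(z)-1|\le 2|z|/(1-|z|)$. This rearranges to $(1-|z|^2)|h''(z)/h'(z)|\le 2(1+|z|)\le 4$.

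Second, since $\omega\in Aut(\mathbb{D})$ is a biholomorphism of $\mathbb{D}$, the Schwarz--Pick inequality is an equality: $(1-|z|^2)|\omega'(z)|=1-|\omega(z)|^2$. Therefore
\[
(1-|z|^2)\,\frac{|\overline{\omega(z)}\,\omega'(z)|}{1-|\omega(z)|^2}=|\omega(z)|<1.
\]
Combining this with the previous bound and the triangle inequality yields $(1-|z|^2)|P_f(z)|<4+1=5$, so $||P_f||\le 5$.

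Third, for sharpness I would exhibit a family in $\mathcal{F}_0$ whose pre-Schwarzian norms approach $5$. Take $h_0(z)=z/(1-z)$ (the half-plane map, convex) and the automorphism $\omega_s(z)=(s-z)/(1-sz)\in Aut(\mathbb{D})$ for $s\in(0,1)$; define $g_s$ by $g_s'=\omega_s h_0'$ with $g_s(0)=0$, and set $f_s=h_0+\overline{g_s}\in\mathcal{F}_0$. For $z=r\in(0,s)$, $P_{h_0}(r)=2/(1-r)$ is real positive, and a direct computation using $|\omega_s(r)|^2=(s-r)^2/(1-sr)^2$ and $\omega_s'(r)=(s^2-1)/(1-sr)^2$ gives $\overline{\omega_s(r)}\,\omega_s'(r)/(1-|\omega_s(r)|^2)=-(s-r)/[(1-sr)(1-r^2)]$, which is real negative. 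Consequently $(1-r^2)|P_{f_s}(r)|=2(1+r)+(s-r)/(1-sr)$, and choosing $s>r$ with $s,r\to 1$ and $(1-s)/(1-r)\to 0$ drives this to $4+1=5$.

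The main subtlety is the sharpness step: because the two terms of $P_f$ enter with opposite signs, naive extremal choices such as $\omega(z)=z$ give only $||P_f||\ge 3$. One must specifically engineer $\overline{\omega}\omega'/(1-|\omega|^2)$ to be real and \emph{negative} on the positive axis near the extremal point of $P_{h_0}$, which is why the automorphism is taken to be $\omega_s(z)=(s-z)/(1-sz)$ (with the minus sign and a base point $s$) and why a two-parameter limit with $s$ approaching $1$ faster than $r$ is needed.
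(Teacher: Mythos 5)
Your proof is correct and follows essentially the same route as the paper: the same decomposition of $P_f$ via \eqref{p1-027}, the bound $(1-|z|^2)|h''/h'|\le 2(1+|z|)$ from the subordination $1+zh''/h'\prec(1+z)/(1-z)$, the Schwarz--Pick bound $|\omega(z)|<1$ on the dilatation term, and the same extremal family (your $\omega_s(z)=(s-z)/(1-sz)$ is just the negative of the paper's $\omega_t(z)=(z-t)/(1-tz)$, which leaves $\overline{\omega}\omega'$ unchanged). The only cosmetic differences are that you invoke equality in Schwarz--Pick for automorphisms and use a diagonal limit $s,r\to1$ with $(1-s)/(1-r)\to0$, whereas the paper maximizes $\psi(r)=2(1+r)+(t-r)/(1-tr)$ exactly in $r$ and then lets $t\to1$.
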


\begin{thm}\label{p1-060}
If $f=h+\overline{g}\in \mathcal{F}_0$ is of the form \eqref{p1-001} then $||S_f||\leq3$. The estimate is best possible.
\end{thm}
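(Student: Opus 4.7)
The plan is to exploit the linear invariance of $\mathcal{F}_0$ to reduce the computation of $||S_f||$ to the single point $z = 0$. For any $\phi \in Aut(\mathbb{D})$ the Schwarzian $S_\phi \equiv 0$, so the chain rule gives $S_{L_\phi(f)}(z) = S_f(\phi(z))\,\phi'(z)^2$. Together with the hyperbolic identity $(1-|z|^2)|\phi'(z)| = 1 - |\phi(z)|^2$, this yields $||S_{L_\phi(f)}|| = ||S_f||$ and, at $z = 0$, $|S_{L_\phi(f)}(0)| = (1-|\phi(0)|^2)^2|S_f(\phi(0))|$. Since $L_\phi(f) \in \mathcal{F}_0$ by linear invariance and $\phi(0)$ ranges over $\mathbb{D}$, one concludes
\[\sup_{f \in \mathcal{F}_0}||S_f|| = \sup_{f \in \mathcal{F}_0}|S_f(0)|,\]
so it suffices to show $|S_f(0)| \le 3$.

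Compute $S_f(0)$ directly from \eqref{p1-025}. Writing $\omega_f(z) = e^{i\theta}(z+\alpha)/(1+\overline{\alpha}z)$ gives $\omega_f(0) = e^{i\theta}\alpha$, $\omega_f'(0) = e^{i\theta}(1-|\alpha|^2)$ and $\omega_f''(0) = -2\overline{\alpha}e^{i\theta}(1-|\alpha|^2)$; substituting these together with $h'(0)=1$, $h''(0)=2a_2$, $h'''(0)=6a_3$, the phase $e^{i\theta}$ cancels and a short computation yields
\[S_f(0) = S_h(0) + 2\overline{\alpha}\,a_2 + \tfrac{1}{2}\overline{\alpha}^{\,2}.\]
Since $h$ is convex, $p(z) := 1 + zh''(z)/h'(z) = 1 + p_1 z + p_2 z^2 + \cdots$ lies in the Carath\'eodory class $\mathcal{P}$; matching coefficients yields $p_1 = 2a_2$, $p_2 = 6a_3 - 4a_2^2$, and therefore $S_h(0) = p_2 - p_1^2/2$, giving
\[S_f(0) = \bigl(p_2 - \tfrac{1}{2}p_1^2\bigr) + \overline{\alpha}\,p_1 + \tfrac{1}{2}\overline{\alpha}^{\,2}.\]

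Applying the classical Carath\'eodory--Toeplitz inequality $|p_2 - p_1^2/2| \le 2 - |p_1|^2/2$ together with the triangle inequality yields
\[|S_f(0)| \le \bigl(2 - \tfrac{1}{2}|p_1|^2\bigr) + |\alpha||p_1| + \tfrac{1}{2}|\alpha|^2 = 2 + |\alpha|^2 - \tfrac{1}{2}\bigl(|p_1|-|\alpha|\bigr)^2 \le 2 + |\alpha|^2 < 3,\]
using $|\alpha| < 1$. For sharpness, choose the convex $h_0$ with $p(z) = (1+z+z^2)/(1-z^2)$, so that $p_1 = 1$ and $p_2 = 2$ sit on the boundary of the Carath\'eodory coefficient body, and take dilatations $\omega_n(z) = (z+\alpha_n)/(1+\alpha_n z)$ with real $\alpha_n\nearrow 1$; then $S_{f_n}(0) = \tfrac{3}{2} + \alpha_n + \tfrac{1}{2}\alpha_n^2$ is positive and $||S_{f_n}|| \ge |S_{f_n}(0)| \to 3$. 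The main obstacle is extracting the clean expression for $S_f(0)$; the key subtlety is that the sharp constant $3$ requires the \emph{full} two-variable Carath\'eodory bound (rather than the cruder $|p_2|\le 2$) and the simultaneous tuning $|\alpha|\to 1$, $|p_1|\to |\alpha|$ revealed by the identity $2 + |\alpha|^2 - \tfrac{1}{2}(|p_1|-|\alpha|)^2$.
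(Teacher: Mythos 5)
Your proof is correct and follows essentially the same route as the paper: reduce to $z=0$ via linear invariance, compute $S_f(0)=S_h(0)+2\overline{\alpha}a_2+\tfrac{1}{2}\overline{\alpha}^2$, bound it by $2+|\alpha|^2$ using a second-coefficient inequality, and let $|\alpha|\to 1$ for sharpness. The only cosmetic difference is that you phrase the key inequality as Carath\'eodory--Toeplitz, $|p_2-\tfrac{1}{2}p_1^2|\le 2-\tfrac{1}{2}|p_1|^2$, whereas the paper writes $p=(1+\epsilon)/(1-\epsilon)$ and uses the equivalent Schwarz--Pick bound $|c_2|\le 1-|c_1|^2$ for the coefficients of $\epsilon$; your extremal family (fixed $h_0$ with $p_1=1$, $p_2=2$ and $\alpha_n\nearrow 1$) likewise matches the paper's up to reparametrization.
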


The classical Bloch theorem asserts the existence of a positive constant $m$ such that for any holomorphic mapping $f$ in the unit disk $\mathbb{D}$, with the normalization $f'(0) =1,$ the image $f(\mathbb{D})$ contains a Schlicht disk of radius $m$. By Schlicht disk, we mean a disk which is the univalent image of some region in $\mathbb{D}$. The Bloch constant is defined as the supremum of such constants $m.$ A harmonic mapping $f=h+\overline{g}\in\mathcal{H}$ is called Bloch mapping if and only if
\begin{align*}
\mathcal{B}_f=\sup_{z\in\mathbb{D}}(1-|z|^2)(|h'(z)|+|g'(z)|)<\infty
\end{align*}
where $\mathcal{B}_f$ is called Bloch constant of $f.$ Chen et al. \cite{Chen-Gauthier-Hengartner-2000} estimated Bloch constant for harmonic mappings. For more information about Bloch constant, we refer \cite{Chen-Gauthier-Hengartner-2000,Kanas-Maharana-Prajapat-2019,Colonna-1989}. Now we will see that there exists some functions $f=h+\overline{g}\in\mathcal{F}_0$ for which the Bloch constant $\mathcal{B}_f$ is unbounded, that is, functions in $\mathcal{F}_0$ are not necessarily Bloch.

\begin{example}
Let $f=h+\overline{g}\in\mathcal{F}_0$ with $\displaystyle h(z)=\frac{z}{1-e^{i\theta}z},~~\theta\in\mathbb{R},$ and the dilatation $\omega(z)=e^{i\alpha}z,~~\alpha\in\mathbb{R}.$ Then the Bloch constant $\mathcal{B}_f$ of $f$ is
\begin{align*}
\mathcal{B}_f=& \sup_{z\in\mathbb{D}}(1-|z|^2)(|h'(z)|+|g'(z)|)\\
= & \sup_{z\in\mathbb{D}}(1-|z|^2)(1+|\omega(z)|)|h'(z)|\\
= & \sup_{z\in\mathbb{D}}(1-|z|^2)(1+|z|)\frac{1}{|1-e^{i\alpha }z|^2}.
\end{align*}
When $z=e^{-i\alpha}r\in \mathbb{D}$, $0\leq r<1$, we have
\begin{align*}
(1-|z|^2)(1+|z|)\frac{1}{|1-e^{i\alpha }z|^2} =\frac{(1-r^2)(1+r)}{(1-r)^2},
\end{align*}
which tends to $\infty$ as $r\to 1$ and so $\mathcal{B}_f$ is unbounded. Hence, the function $f$ is not Bloch.
\end{example}

\begin{example}
Let $f=h+\overline{g}\in\mathcal{F}_0$ with $h(z)$ be such that $h'(z)=\frac{1}{1-z}$ and dilatation $\omega(z)=z.$ It is easy to see that
\begin{align*}
\mathcal{B}_f=\sup_{z\in\mathbb{D}}(1-|z|^2)(1+|\omega(z)|)|h'(z)| \leq \sup_{z\in\mathbb{D}}(1+|z|)^2=4.
\end{align*}
So the function $f$ is Bloch.
\end{example}

If $f=h+\overline{g}\in\mathcal{F}_0$ then $h\in\mathcal{A}$ is a convex univalent function for which the estimate of the Taylor coefficient and distortion theorem are well-known in the literature (see \cite{Goodman-1983}).
Our next two results give coefficient estimates and distortion theorem for the co-analytic part $g$ for functions in the class $\mathcal{F}_0$.

\begin{thm}\label{p1-065}
Let $f=h+\overline{g}\in\mathcal{F}_0$ is of the form \eqref{p1-001}
then
\begin{align*}
|b_n|\leq 1,~~\text{for all}~~n\geq1.
\end{align*}
The estimate is best possible.
\end{thm}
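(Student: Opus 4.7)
The plan is to reduce to the extremal convex function $h(z)=z/(1-z)$ via the Herglotz-type integral representation of convex functions, compute $g$ explicitly there by partial fractions, and then establish the resulting coefficient bound by a short Cauchy--Schwarz chain.

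Every normalized convex univalent $h$ admits the representation $h(z)=\int_{-\pi}^{\pi}z/(1-ze^{-it})\,d\mu(t)$ for some probability measure $\mu$. Substituting into $g(z)=\int_0^z\omega(\zeta)h'(\zeta)\,d\zeta$ and interchanging the order of integration realizes $g$ as the convex combination $\int g_t\,d\mu(t)$, where $g_t$ corresponds to the extremal $h_t(z)=z/(1-ze^{-it})$. By the triangle inequality on coefficients, it suffices to prove $|b_n(g_t)|\leq 1$ for each $t$. A change of variable $\zeta=e^{it}\eta$ shows $g_t(z)=e^{it}\widetilde{g}_0(e^{-it}z)$, where $\widetilde g_0$ has the same form as $g_0$ but with the rotated automorphism $\widetilde{\omega}(\eta):=\omega(e^{it}\eta)\in Aut(\mathbb{D})$; moduli of Taylor coefficients are preserved, so we may assume $h(z)=z/(1-z)$.

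Write $\omega(z)=e^{i\theta}(z+\alpha)/(1+\bar{\alpha}z)$ and decompose $g'(z)=\omega(z)/(1-z)^2$ into partial fractions; integration and Taylor expansion yield, with $\beta=-\bar{\alpha}$, $\gamma=(1+\alpha)/(1+\bar{\alpha})$ (which is unimodular), and $\lambda=(1-|\alpha|^2)/|1+\alpha|^2>0$,
\begin{align*}
b_n \;=\; e^{i\theta}\gamma\Bigl[\,1-\frac{\lambda\,(1-\beta^n)}{n}\,\Bigr].
\end{align*}
Since $|e^{i\theta}\gamma|=1$ and $\lambda=(1-|\beta|^2)/|1-\beta|^2$, squaring $|b_n|\leq 1$ and clearing denominators reduces the bound to
\begin{align*}
(1-|\beta|^2)\,|Q(\beta)|^2 \;\leq\; 2n\,\mathrm{Re}\bigl((1-\beta)\,Q(\beta)\bigr),\qquad Q(\beta):=\sum_{k=0}^{n-1}\beta^k=\frac{1-\beta^n}{1-\beta}.
\end{align*}

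This inequality is proved by a three-link chain: Cauchy--Schwarz gives $|Q(\beta)|^2\leq n\sum_{k=0}^{n-1}|\beta|^{2k}=n(1-|\beta|^{2n})/(1-|\beta|^2)$, so the left-hand side is at most $n(1-|\beta|^{2n})$; factoring $1-|\beta|^{2n}=(1-|\beta|^n)(1+|\beta|^n)$ and using $1+|\beta|^n\leq 2$ gives $n(1-|\beta|^{2n})\leq 2n(1-|\beta|^n)$; and $\mathrm{Re}(\beta^n)\leq|\beta|^n$ gives $2n(1-|\beta|^n)\leq 2n\,\mathrm{Re}(1-\beta^n)=2n\,\mathrm{Re}((1-\beta)Q(\beta))$. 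Sharpness follows by fixing $h(z)=z/(1-z)$ and letting $|\alpha|\to 1$: then $\omega$ degenerates to a unimodular constant, $g$ approaches a unimodular multiple of $h$, and $|b_n|\to 1$. The main obstacle is organizing the partial-fraction computation so that the three residues combine cleanly; the key simplification is that the ratio $(1+\alpha)/(1+\bar{\alpha})$ is automatically unimodular, letting one pull a unit modulus factor out of $b_n$ and reduce everything to the elementary Cauchy--Schwarz chain above.
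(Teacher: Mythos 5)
Your proof is correct, but it takes a genuinely different route from the paper. The paper's argument is a two-line majorization: since $g'=\omega h'$ with $|\omega|<1$ and $h$ is convex, $zg'(z)$ is majorized by the starlike (hence close-to-convex) function $zh'(z)$, and the Hallenbeck--MacGregor majorization lemma immediately gives $|nb_n|\le n$. You instead invoke the Brickman--MacGregor--Wilken convex-hull representation $h(z)=\int z/(1-ze^{-it})\,d\mu(t)$ to reduce to the half-plane mapping, and then compute $b_n$ in closed form; I checked your formula $b_n=e^{i\theta}\gamma\bigl[1-\lambda(1-\beta^n)/n\bigr]$ against the paper's extremal example ($\theta=0$, $\alpha$ real), where it reproduces $b_n=1-\tfrac{1}{n}\tfrac{1-\gamma}{1+\gamma}(1-(-\gamma)^n)$ exactly, and your reduction of $|b_n|\le1$ to $(1-|\beta|^2)|Q(\beta)|^2\le 2n\,\mathrm{Re}((1-\beta)Q(\beta))$ together with the Cauchy--Schwarz chain is valid since $|\beta|=|\alpha|<1$. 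The trade-offs: your approach is self-contained modulo the convex-hull theorem (which is of comparable depth to the majorization lemma, both from the Hallenbeck--MacGregor circle) and it delivers an explicit coefficient formula that makes the sharpness analysis transparent; the paper's approach is much shorter and, notably, proves the stronger statement that $|b_n|\le 1$ for \emph{any} sense-preserving dilatation $|\omega|<1$ with convex analytic part, whereas your partial-fraction computation is tailored to $\omega\in Aut(\mathbb{D})$. One small point of care in your sharpness step: ``$|\alpha|\to 1$'' should be taken along a ray avoiding pathologies in the formula (e.g.\ $\alpha\to 1$ as in the paper), since $\lambda\to\infty$ when $\alpha\to -1$; the limit $|b_n|\to 1$ still holds there, but the reasoning ``$\lambda\to 0$'' does not.
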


\begin{thm}\label{p1-070}
Let $f=h+\overline{g}\in\mathcal{F}_0$ is of the form \eqref{p1-001} then for $|z|=r<1,$
\begin{align*}
0\leq |g'(z)|\leq\frac{1}{(1-r)^2}.
\end{align*}
The estimate is best possible.
\end{thm}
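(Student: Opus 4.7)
The bound reduces immediately to two standard facts, knitted together by the identity $g'=\omega_f\, h'$. Since $f\in\mathcal{F}_0$ forces $h$ to be a normalized convex univalent analytic function (because ${\rm Re\,}(1+zh''(z)/h'(z))>0$), the classical distortion theorem for this class (see \cite{Goodman-1983}) yields
\begin{equation*}
|h'(z)|\leq\frac{1}{(1-r)^2},\qquad |z|=r<1,
\end{equation*}
with equality attained by the rotated half-plane map $h_\beta(z)=z/(1-e^{i\beta}z)$ at the point $z=re^{-i\beta}$. Simultaneously, $\omega_f\in Aut(\mathbb{D})$ is a self-map of $\mathbb{D}$, so $|\omega_f(z)|<1$ for every $z\in\mathbb{D}$. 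Multiplying the two bounds gives
\begin{equation*}
|g'(z)|=|\omega_f(z)|\,|h'(z)|\leq\frac{1}{(1-r)^2},
\end{equation*}
and the lower bound $|g'(z)|\geq 0$ is trivial.

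For the sharpness claim I would note that equality cannot be attained by any single $f\in\mathcal{F}_0$, because $|\omega_f(z)|<1$ strictly on $\mathbb{D}$; so ``best possible'' must be read as the statement that $1/(1-r)^2$ is the supremum of $|g'(z)|$ over the class at each fixed $|z|=r$. To exhibit this, I would fix $z_0=re^{-i\beta}$, take $h=h_\beta$ so that $|h_\beta'(z_0)|=1/(1-r)^2$, and introduce the one-parameter family of disk automorphisms
\begin{equation*}
\omega_s(z)=\frac{e^{i\beta}z+s}{1+se^{i\beta}z}\in Aut(\mathbb{D}),\qquad s\in(0,1).
\end{equation*}
A short calculation gives $\omega_s(z_0)=(r+s)/(1+rs)\to 1$ as $s\to 1^-$. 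Defining $g_s$ by $g_s(0)=0$ and $g_s'=\omega_s h_\beta'$ then produces harmonic mappings $f_s=h_\beta+\overline{g_s}\in\mathcal{F}_0$ with $|g_s'(z_0)|\to 1/(1-r)^2$, showing that the estimate is optimal.

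There is essentially no conceptual obstacle: the argument is a two-line estimate plus an explicit witness for sharpness. The only subtleties are interpreting ``best possible'' in the asymptotic sense (since the supremum is not attained) and choosing an automorphism whose modulus tends to $1$ at the prescribed point $z_0$ to match the extremal direction of the convex distortion bound.
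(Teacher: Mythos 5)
Your proof is correct and follows essentially the same route as the paper: the distortion theorem for the convex function $h$, the bound on the modulus of the dilatation, and the product $g'=\omega_f h'$, with sharpness exhibited by $h(z)=z/(1-z)$ (up to rotation) and the automorphisms $(z+s)/(1+sz)$ as $s\to 1^-$. The only cosmetic difference is that the paper uses the two-sided estimate $|r-\gamma|/(1-\gamma r)\le|\omega(z)|\le(r+\gamma)/(1+\gamma r)$ and optimizes over $\gamma=|\alpha|$, and it also records explicitly that the lower bound $0$ is attained (taking $\gamma=r$ so that $g'(-r)=0$), a point you dismiss as trivial but could state.
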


\section{Proof of Main Results}\label{Proof of Main Results}

\begin{proof}[\textbf{Proof of Theorem \ref{p1-029}}]
Let $f(z)=h(z)+\overline{g(z)}$ be a sense-preserving harmonic mapping with dilatation $q(z)=\omega^2(z)=g'(z)/h'(z)$. Then
the pre-Schwarzian derivative $\mathbb{P}_f$ of $f$ is given by
\begin{align*}
\mathbb{P}_f(z)=  \frac{h''(z)}{h'(z)}+\frac{2\omega'(z)\overline{\omega(z)}}{1+|\omega(z)|^2}
=  P_h(z)+\frac{2\omega'(z)\overline{\omega(z)}}{1+|\omega(z)|^2}.
\end{align*}
For every $z\in\mathbb{D}$, by Schwarz-Pick lemma, it follows that
\begin{align*}
(1-|z|^2)\Big||\mathbb{P}_f(z)|-|P_h(z)|\Big|
&\leq (1-|z|^2)\Big|\mathbb{P}_f(z)-P_h(z)\Big|\\
&= (1-|z|^2) \left|\frac{2\omega'(z)\overline{\omega(z)}}{1+|\omega(z)|^2}\right|\\
&\leq \frac{2|\omega(z)|(1-|\omega(z)|^2)}{1+|\omega(z)|^2}\\
& \leq \sup_{z\in\mathbb{D}} \frac{2|\omega(z)|(1-|\omega(z)|^2)}{1+|\omega(z)|^2}\\
&= \sup_{\omega\in\mathbb{D}}\frac{2|\omega|(1-|\omega|^2)}{1+|\omega|^2}\\
&= \sup_{0\leq r <1}\frac{2r(1-r^2)}{1+r^2}\\
&= \frac{2r_0(1-r_0^2)}{1+r_0^2}, ~~\text{where}~ r_0=\sqrt{\sqrt{5}-2}.
\end{align*}
This shows that $||\mathbb{P}_f||$ is finite if and only if $||P_h||$ is finite. Moreover, if $||\mathbb{P}_f||<\infty$ then
\begin{align*}
\Big|||\mathbb{P}_f||-||P_h||\Big|
& \leq \sup_{z\in\mathbb{D}} (1-|z|^2)\Big||\mathbb{P}_f(z)|-|P_h(z)|\Big|\leq \frac{2r_0(1-r_0^2)}{1+r_0^2}\approx 0.6005....
\end{align*}

To show that the estimate is sharp, we consider the sense-preserving harmonic mapping $F(z)=h(z)+\overline{g(z)}=z+\overline{z^3/3}$ with dilatation $\omega^2(z)=z^2$. Then $P_h(z)=0$ and so $||P_h||=0.$ Moreover,
\begin{align*}
\mathbb{P}_F(z)=P_h+\frac{2|z|}{1+|z|^2}= \frac{2|z|}{1+|z|^2},
\end{align*}
and so
\begin{align*}
||\mathbb{P}_F||=\sup_{z\in\mathbb{D}}\frac{2|z|(1-|z|^2)}{1+|z|^2}=\sup_{0\leq r<1}\frac{2r(1-r^2)}{1+r^2}=\frac{2r_0(1-r_0^2)}{1+r_0^2},
\end{align*}
where $r_0=\sqrt{\sqrt{5}-2}$. Thus
\begin{align*}
\Big|||\mathbb{P}_F||-||P_h||\Big|=\frac{2r_0(1-r_0^2)}{1+r_0^2}.
\end{align*}
\end{proof}

\begin{proof}[\textbf{Proof of Corollary \ref{p1-030}}]
Since $h\in\mathcal{A}$ satisfies  ${\rm Re\,}Q_h(z)>-\frac{1}{2}$ for $z\in\mathbb{D},$ it follows that (see \cite{Ponnusamy-Sahoo-Yanagihara-2014})
\begin{align*}
\left|\frac{zh''(z)}{h'(z)}\right|\leq \frac{3r}{1-r}\quad\text{for}\quad|z|=r<1
\end{align*}
and so
$$||P_h||=\sup_{z\in\mathbb{D}}(1-|z|^2)\left|\frac{h''(z)}{h'(z)}\right|\leq6.$$
Therefore, by Theorem \ref{p1-029} we can easily get
$$||\mathbb{P}_f||\leq ||P_h||+\frac{2r_0(1-r_0^2)}{1+r_0^2}\approx 6.6005\ldots,~~\text{where}~~r_0=\sqrt{\sqrt{5}-2}.$$
\end{proof}

\begin{proof}[\textbf{Proof of Corollary \ref{p1-031}}]
Since $h\in\mathcal{A}$ satisfies  ${\rm Re\,}Q_h(z)<\frac{3}{2}$ for $z\in\mathbb{D},$ it follows that (see \cite{Maharan-Prajapat-Srivastava-2017})
\begin{align*}
\left|\frac{zh''(z)}{h'(z)}\right|\leq \frac{r}{1-r}\quad\text{for}\quad|z|=r<1
\end{align*}
and so
$$||P_h||=\sup_{z\in\mathbb{D}}(1-|z|^2)\left|\frac{h''(z)}{h'(z)}\right|\leq2.$$
Therefore, by Theorem \ref{p1-029}, one can easily get
$$||\mathbb{P}_f||\leq ||P_h||+\frac{2r_0(1-r_0^2)}{1+r_0^2}\approx 2.6005\ldots,~~\text{where}~~r_0=\sqrt{\sqrt{5}-2}.$$
\end{proof}

\begin{proof}[\textbf{Proof of Corollary \ref{p1-033}}]
Since $h\in\mathcal{A}$ satisfies $-\frac{1}{2}<{\rm Re\,}Q_h(z)<\frac{3}{2}$ for $z\in\mathbb{D}$ i.e. the analytic part $h$ satisfies the hypothesis of the Corollary \ref{p1-030} and Corollary \ref{p1-031} simultaneously and so the pre-Schwarzian norm of $f$ is
\begin{align*}
||\mathbb{P}_f|| & \leq \min\left\{2+\frac{2r_0(1-r_0^2)}{1+r_0^2}, 6+\frac{2r_0(1-r_0^2)}{1+r_0^2}\right\}\\
& = 2+\frac{2r_0(1-r_0^2)}{1+r_0^2}\approx 2.6005\ldots,~~\text{where}~~r_0=\sqrt{\sqrt{5}-2}.
\end{align*}
\end{proof}

Before we prove our next result, let us recall an important and useful tool known as the differential subordination technique. Many problems in geometric function theory can be solved in a simple and sharp manner with the help of differential subordination. Let $g$ and  $h$ be analytic functions in the unit disk $\mathbb{D}$. A function $g$ is said to be subordinate to $h$, written  as
$g\prec h$ or $g(z)\prec h(z)$ if there exists an analytic function $\epsilon: \mathbb{D} \rightarrow \mathbb{D}$ with $\epsilon(0)=0$ such that $g(z)=h(\epsilon(z))$.
If $h$ is univalent, then $g\prec h$ if and only if $g(0)=h(0)$ and $g(\mathbb{D})\subseteq h(\mathbb{D})$. A function $g$ is said to be majorized by $h$ if $|g(z)|\leq|h(z)|$ in the unit disk $\mathbb{D}.$ Equivalently, a function $g$ is said to be majorized by $h$ if there exists an analytic function $\epsilon: \mathbb{D} \rightarrow \mathbb{D}$ such that $g(z)=\epsilon(z)h(z)$. For more details about subordination and majorization, we refer to \cite{Hallenbeck-MacGregor-1984,Pommerenke-1975}.


\begin{proof}[\textbf{Proof of Theorem \ref{p1-057}}]
Let $f=h+\overline{g}\in\mathcal{F}_0$. Then $h$ satisfies the subordination relation
\begin{align*}
 1+z\frac{h''(z)}{h'(z)}\prec \frac{1+z}{1-z},
\end{align*}
which gives
\begin{align}\label{p1-075}
\left|\frac{h''(z)}{h'(z)}\right|\leq \frac{2}{1-|z|}.
\end{align}
Let $\omega=g'/h'\in Aut(\mathbb{D})$ be the dilatation of $f=h+\overline{g}.$ Then by Schwarz-Pick lemma  and \eqref{p1-075}, we have
\begin{align}\label{p1-080}
||P_f||= & \sup_{z\in\mathbb{D}}(1-|z|^2)|P_f(z)|\\
      = &  \sup_{z\in\mathbb{D}}(1-|z|^2)\left|\frac{h''(z)}{h'(z)}-\frac{\overline{\omega(z)}\omega'(z)}{1-|\omega(z)|^2}\right|\nonumber\\
       \leq & \sup_{z\in\mathbb{D}}(1-|z|^2)\left(\left|\frac{h''(z)}{h'(z)}\right|+\left|\frac{\overline{\omega(z)}\omega'(z)}{1-|\omega(z)|^2}\right|\right)\nonumber\\
      \leq & \sup_{z\in\mathbb{D}}(1-|z|^2)\left(\frac{2}{1-|z|}+\frac{|\overline{\omega(z)}|}{1-|z|^2}\right)\nonumber\\
      \leq & \sup_{z\in\mathbb{D}}(2(1+|z|)+|\omega(z)|)\nonumber\\
        = & 5\nonumber.
\end{align}
We now show that the estimate is best possible. For each $t\in[1/2,1),$ we consider the function $f_t=h+\overline{g_t}\in\mathcal{H}$ with dilatation $\omega_t(z)=\frac{z-t}{1-tz}$ and $h(z)$ be such  that
\begin{align*}
 1+z\frac{h''(z)}{h'(z)}=\frac{1+z}{1-z}.
\end{align*}
Then clearly $f_t\in\mathcal{F}_0$ for all $t\in [1/2,1).$  A simple calculation gives
\begin{equation*}\label{p1-085}
\frac{\overline{\omega_t(z)}\omega_t'(z)}{1-|\omega_t(z)|^2}=\frac{\overline{z}-t}{(1-tz)(1-|z|^2)}
\end{equation*}
and so,
\begin{equation}\label{p1-090}
\begin{split}
||P_{f_t}||
&= \sup_{z\in\mathbb{D}}(1-|z|^2)\left|\frac{h''(z)}{h'(z)}-\frac{\overline{\omega_t(z)}\omega_t'(z)}{1-|\omega_t(z)|^2}\right|\\
&= \sup_{z\in\mathbb{D}}(1-|z|^2)\left|\frac{2}{1-z}-\frac{\overline{z}-t}{(1-tz)(1-|z|^2)}\right|.
\end{split}
\end{equation}
Let
\begin{align*}
M_t= & \sup_{z\in[0,1)}(1-|z|^2)\left|\frac{2}{1-z}-\frac{\overline{z}-t}{(1-tz)(1-|z|^2)}\right|\\
 = & \sup\limits_{r\in [0,1)}\left|2(1+r)-\frac{r-t}{1-tr}\right|=\sup\limits_{r\in [0,1)}\psi(r),
\end{align*}
where $\psi(r)=2(1+r)-\frac{r-t}{1-tr}$. Then
\begin{align*}
\psi'(r)=2-\frac{1-t^2}{(1-tr)^2}~~\quad\text{and}~~\quad \psi''(r)=-\frac{2t(1-t^2)}{(1-tr)^3}<0~~\text{for all}~~r\in [0,1).
\end{align*}
 Now, $\psi'(r)=0$ gives
\begin{align*}
r=r_0:=\frac{1}{t}-\frac{1}{t}\sqrt{\frac{1-t^2}{2}}.
\end{align*}
So the maximum value of $\psi$ is attained at $r_0.$ Hence
\begin{equation}\label{p1-095}
M_t=\psi(r_0)=2+\frac{3}{t}-\frac{4}{t}\sqrt{\frac{1-t^2}{2}}.
\end{equation}
From  \eqref{p1-090} and \eqref{p1-095} it is clear that $M_t\leq||P_{f_t}||\leq5.$
We note that $M_t$ is an increasing function in $t\in[1/2,1)$ and $M_t\rightarrow5$ as $t\rightarrow1$. This shows that estimate \eqref{p1-080} is best possible.
\end{proof}


\begin{proof}[\textbf{Proof of Theorem \ref{p1-060}}]
Let $f=h+\overline{g}\in \mathcal{F}_0.$ Since the class $\mathcal{F}_0$ is linearly invariant, it follows that the Koebe transform $L_\phi(f)=H+\overline{G}$ defined by \eqref{p1-040} also belong to $\mathcal{F}_0$ for $\phi\in Aut(\mathbb{D}).$ Thus the dilatation $\omega$ of $L_\phi(f)$ is of the form
$$\omega= e^{i\theta}\frac{z+\alpha_0}{1+\overline{\alpha_0}z}~~\text{for some}~~\theta\in\mathbb{R}, \alpha_0\in\mathbb{D}$$
and the function $H(z)$ is of the form of \eqref{p1-001} satisfying
\begin{align*}
{\rm Re\,}\left(1+\frac{zH''(z)}{H'(z)}\right)>0\quad\text{for}~ z\in\mathbb{D}.
\end{align*}
Then there exists an analytic function $\epsilon:\mathbb{D}\rightarrow\mathbb{D}$ of the form $\epsilon(z)=\sum\limits_{n=1}^\infty c_nz^n$ such that
\begin{equation*}\label{p1-100}
1+\frac{zH''(z)}{H'(z)}= \frac{1+\epsilon(z)}{1-\epsilon(z)},
\end{equation*}
and so
\begin{equation}\label{p1-105}
P_H(z)=\frac{H''(z)}{H'(z)}=\frac{2\epsilon(z)}{z(1-\epsilon(z))}=2c_1+(2c_2+2c_1^2)z+\ldots
\end{equation}
From \eqref{p1-005} and \eqref{p1-105}, the  pre-Schwarzian and Schwarzian derivatives of $H$ at $z=0$ is given by
\begin{align}\label{p1-110}
P_H(0)=2c_1,
\end{align}
and
\begin{equation}\label{p1-115}
S_H(0)=P_H'(0)-\frac{1}{2}P_H^2(0)=(2c_2+2c_1^2)-\frac{1}{2}(2c_1)^2=2c_2.
\end{equation}
Since the class $\mathcal{F}_0$ is linearly invariant and the Schwarzian derivative is affine invariant, by chain rule for the Schwarzian derivative (see \cite{Hernandez-Martin-2015}), we have
\begin{align*}
(1-|z|^2)^2|S_f|=|S_{f\circ\phi}(0)|=|S_{L_{\phi}(f)}(0)|
\end{align*}
for each $z\in\mathbb{D}$ and $\phi$ is an automorphism of the disk with $\phi(0) = z$. Hence, it is not very difficult to prove that
$$
\sup_{f\in\mathcal{F}_0}||S_f|| =\sup_{f\in\mathcal{F}_0}|S_f(0)| =\sup_{f\in\mathcal{F}_0}|S_{L_{\phi}(f)}(0)|.
$$

Therefore from \eqref{p1-110} and \eqref{p1-115} it follows that
\begin{equation*}
\begin{split}
&|S_{L_{\phi}(f)}(0)|\\
&= \left|S_H(0)+\frac{\overline{\omega(0)}}{1-|\omega(0)|^2}\left(\frac{H''(0)}{H'(0)}\omega'(0)-\omega''(0)\right)
-\frac{3}{2}\left(\frac{\omega'(0)\overline{\omega(0)}}{1-|\omega(0)|^2}\right)^2\right|\\
&= \left|2c_2+\frac{\overline{\alpha}e^{-i\theta}}{1-|\alpha|^2}\left\{2c_1(1-|\alpha|^2)e^{i\theta}+2\overline{\alpha} e^{i\theta}(1-|\alpha|^2)\right\}-\frac{3}{2}\left(\frac{e^{i\theta}(1-|\alpha|^2)\overline{\alpha}e^{-i\theta}}{1-|\alpha|^2}\right)^2\right|\\
&= |2c_2+2\overline{\alpha}c_1+2\overline{\alpha}^2-\frac{3}{2}\overline{\alpha}^2|\\
&\leq  2|\alpha||c_1|+2|c_2|+\frac{|\alpha|^2}{2}\\
&\leq  2|\alpha||c_1|+2(1-|c_1|^2)+\frac{|\alpha|^2}{2}\\
&\leq  2(1+|\alpha||c_1|-|c_1|^2)+\frac{|\alpha|^2}{2}.
\end{split}
\end{equation*}
It is a simple exercise to see that the function $\psi(|c_1|):=1+|\alpha||c_1|-|c_1|^2,~~0\leq |c_1|\leq1$ has maximum value at $|c_1|=|\alpha|/2.$ Thus,
\begin{align*}
 ||S_f|| \leq 2  \left(1+\frac{|\alpha|^2}{4}\right)+\frac{|\alpha|^2}{2}
 = 2+|\alpha|^2, \quad |\alpha|\in [0,1),
\end{align*}
and so $||S_f||\leq 3.$\\

To illustrate that the estimate is best possible, let us consider the harmonic function $f_t=h_t+\overline{g_t}\in\mathcal{H}$ with dilatation $\omega_t(z)=(z+t)/(1+tz),~ t\in [0,1)$ and  $h_t(z)$ be such that
\begin{equation}\label{p1-120}
1+\frac{zh_t''(z)}{h_t'(z)}=\frac{1+\epsilon_t(z)}{1-\epsilon_t(z)} ,
\end{equation}
where
\begin{align*}
\epsilon_t(z)=z\frac{z+\frac{t}{2}}{1+\frac{t}{2}z}=\sum_{n+1}^\infty c_nz^n=\frac{t}{2}z+\left(1-\frac{t^2}{4}\right)z^2+ \cdots.
\end{align*}
Clearly $f_t\in\mathcal{F}_0$ for all $t\in[0,1).$\\

Now we calculate $(1-|z|^2)S_{f_t}(z)$ at the origin, which is
\begin{align*}
S_{f_t}(0)=& S_{h_t}(0)+\frac{\overline{\omega_t(0)}}{1-|\omega_t(0)|^2}\left(\frac{h_t''(0)}{h_t'(0)}\omega_t'(0)-\omega_t''(0)\right)-\frac{3}{2}\left(\frac{\omega_t'(0)\overline{\omega_t(0)}}{1-|\omega_t(0)|^2}\right)^2\\
= & 2c_2+\frac{t}{1-t^2}(2c_1(1-t^2)+2t(1-t^2))-\frac{3}{2}\left(\frac{(1-t^2)t}{1-t^2}\right)^2\\
= & 2+t^2, ~t\in [0,1),
\end{align*}
which tends to $3$ as $t\rightarrow 1$. This ensures  that the estimate is best possible.
\end{proof}

To prove the Theorem \ref{p1-065}, we require the following result of Hallenbeck and MacGregor \cite{Hallenbeck-MacGregor-1974}.
\begin{lem}\label{p1-125}
Suppose that an analytic function $g$ of the form $g(z)=\sum\limits_{n=1}^\infty b_nz^n$ is majorized by $h$ where $h$ is a normalized close-to-convex function in the unit disk $\mathbb{D}$. Then $|b_n|\leq n$ for all $n\ge 1$.
\end{lem}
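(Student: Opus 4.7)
The plan is to recast the majorization hypothesis as a Schur-class factorization and then reduce to classical coefficient estimates. First I set $\varepsilon(z) := g(z)/h(z)$. Because $h$ is a normalized close-to-convex function (so $h(0)=0$, $h'(0)=1$, and $h$ is univalent on $\mathbb{D}$), while $g(0)=0$, the quotient $\varepsilon$ extends holomorphically across the origin, and the majorization $|g|\le|h|$ forces $|\varepsilon(z)|\le 1$ throughout $\mathbb{D}$. Thus $\varepsilon$ is a Schur function, with Taylor expansion $\varepsilon(z)=\sum_{k\ge 0} c_k z^k$ satisfying $|c_0|\le 1$ and the Parseval-type bound $\sum_{k\ge 0}|c_k|^2 \le 1$.

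From $g=\varepsilon\cdot h$ the Cauchy product gives
\begin{align*}
b_n=\sum_{k=0}^{n-1} c_k\, a_{n-k},\qquad a_1=1.
\end{align*}
To bound this sum I would combine two ingredients: (i) Reade's classical coefficient inequality $|a_j|\le j$ for normalized close-to-convex functions, which itself follows from the defining representation $h'(z)=p(z)\,\varphi'(z)$ with $\varphi$ convex univalent ($|\varphi_j|\le 1$) and $\operatorname{Re} p>0$ ($|p_k|\le 2$ for $k\ge 1$), via convolution and the triangle inequality; and (ii) the Schur--Pick constraints on $\{c_k\}$.

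The main obstacle is that a plain triangle inequality or Cauchy--Schwarz applied to the display above produces only $O(n^{3/2})$, missing the sharp constant. To close this gap I would follow the Hallenbeck--MacGregor integration-and-optimisation strategy: represent $b_n$ via the Cauchy integral on a circle of radius $r<1$,
\begin{align*}
b_n r^n=\frac{1}{2\pi}\int_0^{2\pi} g(re^{i\theta})\,e^{-in\theta}\,d\theta,
\end{align*}
use the pointwise majorization $|g|\le|h|$ to pass to the integral mean of $h$, invoke the integral-mean estimates available for close-to-convex functions (the relevant extremal being the Koebe function $z/(1-z)^2$), and finally optimise in $r$ while exploiting the Schur structure of $\varepsilon$. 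The delicate technical step is folding the Schur--Pick information about $\varepsilon$ into the integral-mean estimate for $h$ so that the resulting bound collapses to exactly $n$, rather than the larger constant-times-$n$ that a naive optimisation in $r$ produces.

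Sharpness is transparent: the Koebe function $h(z)=z/(1-z)^2$ is close-to-convex (in fact starlike), is majorised trivially by itself (take $g=h$), and has $b_n=n$ for every $n\ge 1$, so the bound $|b_n|\le n$ cannot be improved.
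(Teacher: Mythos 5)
This lemma is nowhere proved in the paper: it is quoted as a known result of Hallenbeck and MacGregor (\emph{Pacific J. Math.} \textbf{50} (1974), 455--468), so your attempt can only be judged on its own merits, and on those merits it has a genuine gap. The preparatory reductions are fine: writing $g=\varepsilon h$ with $\varepsilon$ a Schur function is exactly the right reformulation of majorization, and you correctly observe that the Cauchy product together with the triangle inequality or Cauchy--Schwarz overshoots. The problem is the main step, which you announce but do not carry out, and which in fact cannot be carried out along the lines you describe. Once you pass from $b_nr^n=\frac{1}{2\pi}\int_0^{2\pi}g(re^{i\theta})e^{-in\theta}\,d\theta$ to the modulus bound $|b_n|r^n\le\frac{1}{2\pi}\int_0^{2\pi}|h(re^{i\theta})|\,d\theta$, the function $\varepsilon$ has disappeared from the estimate, so there is no ``Schur--Pick information'' left to fold in; and the bound that remains is provably too weak. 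Indeed, for univalent (hence for close-to-convex) $h$ the sharp integral-mean estimate is Baernstein's, with extremal the Koebe function $k(z)=z/(1-z)^2$, for which $\frac{1}{2\pi}\int_0^{2\pi}|k(re^{i\theta})|\,d\theta=\frac{r}{1-r^2}$. Hence the best conclusion available from your scheme is
\begin{equation*}
|b_n|\;\le\;\inf_{0<r<1}\frac{1}{r^{n-1}(1-r^2)}\;=\;\frac{n+1}{2}\left(\frac{n+1}{n-1}\right)^{(n-1)/2}\qquad(n\ge2),
\end{equation*}
whose right-hand side is asymptotic to $\tfrac{e}{2}(n+1)$ and exceeds $n$ for \emph{every} $n\ge2$ (for $n=2$ it gives $\tfrac{3}{2}\sqrt{3}\approx2.598$). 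Worse, this overshoot occurs even in the extremal case $g=h=k$, where $b_n=n$ exactly: the loss comes from discarding the phase of $g$ under the integral, and no optimisation in $r$ can recover it. So the ``delicate technical step'' you defer is not a technicality; it is the entire content of the lemma, and the integral-mean route is structurally incapable of supplying it.

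For completeness: the proof in the cited source does not use integral means at all; its methods are those of extreme-point and convex-hull theory, as the title of that paper indicates. In that framework, every close-to-convex $h$ lies in the closed convex hull generated by the explicit kernels $\dfrac{z-\frac{1}{2}(x+y)z^{2}}{(1-yz)^{2}}$ with $|x|=|y|=1$ (Brickman--MacGregor--Wilken), and for fixed $\varepsilon$ the map $h\mapsto b_n$ is a continuous linear functional, so the problem reduces to coefficient bounds for $\varepsilon$ times these kernels, where the full analytic structure, not just moduli, is exploited. If you want a self-contained argument, that is the mechanism to pursue; otherwise the honest move is the one the paper itself makes, namely to cite Hallenbeck--MacGregor.
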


\begin{proof}[\textbf{Proof of Theorem \ref{p1-065}}]
Let $f=h+\overline{g}\in\mathcal{F}_0$ be of the form (\ref{p1-001}) with dilatation $\omega\in Aut(\mathbb{D})$. Then $g'(z)=\omega(z)h'(z)$ and so
\begin{align}\label{p1-130}
zg'(z)=\omega(z)zh'(z)~~\text{where}~~|\omega(z)|<1.
\end{align}
As $h(z)$ is convex univalent function, it follows that $zh'(z)$ is starlike and hence close-to-convex function. Thus from \eqref{p1-130} we see that $zg'(z)$ is majorized by the close-to-convex function $zh'(z)$ and so by the Lemma \ref{p1-125},
\begin{align*}
|nb_n|\leq n ~~\text{for all}~~ n\geq 1
\end{align*}
which implies that
\begin{align*}
|b_n|\leq1.
\end{align*}

To show that the estimate is best possible, we consider the function $f=h+\overline{g}$ with $h(z)=z/(1-z)$ and dilatation $\omega(z)=(z+\gamma)/(1+\gamma z),~\gamma\in[0,1)$. A direct calculation from $g'(z)=h'(z)\omega(z)$ gives
\begin{align*}
g'(z)=\frac{z+\gamma}{(1+\gamma z)(1-z)^2} =\sum\limits_{n=1}^\infty n\left[1-\frac{1}{n}\left(\frac{1-\gamma}{1+\gamma}\right)\left(1-(-\gamma)^n\right)\right]z^{n-1}.
\end{align*}
Clearly, the coefficients $\displaystyle b_n=1-\frac{1}{n}\left(\frac{1-\gamma}{1+\gamma}\right)\left(1-(-\gamma)^n\right)$ tend to $1$ as $\gamma\to 1^{-}$ for each $n\ge 1$.
\end{proof}


\begin{proof}[\textbf{Proof of Theorem \ref{p1-070}}]
Let $f=h+\overline{g}\in\mathcal{F}_0$. Then for $|z|=r<1$ we have \cite{Pinchuk-1968}
\begin{align}\label{p1-140}
\frac{1}{(1+r)^2}\leq|h'(z)|\leq\frac{1}{(1-r)^2}.
\end{align}
As the dilatation $\omega$ of $f$ is of the form $\omega(z)=e^{i\theta}(z+\alpha)/(1+\overline{\alpha}z),$ we have the well known result, (see \cite{Kanas-Maharana-Prajapat-2019})
\begin{align}\label{p1-145}
\frac{|r-\gamma|}{1-\gamma r}\leq|\omega(z)|\leq\frac{r+\gamma}{1+\gamma r}~~\text{where}~~\gamma=|\alpha|.
\end{align}
Using $g'(z)=\omega(z)h'(z)$, \eqref{p1-140} and \eqref{p1-145} for $|z|=r<1,$ we get
\begin{align*}
\frac{|-r+\gamma|}{(1-\gamma r)(1+r)^2}\leq |g'(z)|\leq\frac{r+\gamma}{(1+\gamma r)(1-r)^2} .
\end{align*}
It is easy to verify that the function $|(\gamma-r)|/\{(1-\gamma r)(1+r)^2\}$ attains minimum at $\gamma=r$ and the function $(r+\gamma)/{(1+\gamma r)(1-r)^2}$ attains maximum as $\gamma\rightarrow1.$

\noindent
Thus,
\begin{align*}
0\leq |g'(z)|\leq\frac{1}{(1-r)^2}.
\end{align*}
For the harmonic function $f=h+\overline{g}$ with $h(z)=z/(1-z)$ and dilatation $\omega(z)=(z+\gamma)/(1+\gamma z),~\gamma\in[0,1)$, it is easy to see that
\begin{align*}
g'(z)=\frac{z+\gamma}{(1+\gamma z)(1-z)^2}.
\end{align*}
For $|z|=r<1,$
\begin{align*}
g'(-r)=\frac{-r+\gamma}{(1-\gamma r)(1+r)^2}~\quad\text{and}\quad g'(r)=\frac{r+\gamma}{(1+\gamma r)(1-r)^2}.
\end{align*}
At $\gamma=r$ we see that $g'(-r)=0$ and $g'(r)\to 1/(1-r)^2$ as $\gamma\to 1^{-}$ which shows that the estimate is best possible.
\end{proof}

\noindent\textbf{Data availability:}
Data sharing not applicable to this article as no data sets were generated or analyzed during the current study.\\

\noindent\textbf{Authors Contributions:}
All authors contributed equally to the investigation of the problem and the order of the authors is given alphabetically according to the surname. All authors read and approved the final manuscript. \\

\noindent\textbf{Acknowledgement:}
The second named author thanks the Department Of Science and Technology, Ministry Of Science and Technology, Government Of India
for the financial support through DST-INSPIRE Fellowship (No. DST/INSPIRE Fellowship/2018/IF180967).


\begin{thebibliography}{99}

\bibitem{Becker-1972}
{\sc J. Becker,}, L{\"o}wnersche Differentialgleichung und quasikonform fortsetzbare schlichte Funktionen, {\it J. Reine Angew. Math.}, {\bf 255} (1972), 23-43.

\bibitem{Becker-Pommerenke-1984}
{\sc J. Becker, C. Pommerenke}, Schlichtheitskriterien und Jordangebiete, {\it J. Reine Angew. Math.}, {\bf 354} (1984), 74-94.

\bibitem{Bshouty-Lyzzaik-2011}
{\sc D. Bshouty, A. Lyzzaik,},  Close-to-convexity criteria for planar harmonic mappings,    {\it Complex Anal. Oper. Theory}, {\bf 5}(3) (2011), 767-774.

\bibitem{Chuaqui-Duren-Osgood-2003}
{\sc M. Chuaqui, P. Duren, B. Osgood}, The Schwarzian derivative for harmonic mappings, {\it J. Anal. Math.}, {\bf 91}(1) (2003), 329-351.

\bibitem{Chuaqui-Duren-Osgood}
{\sc M. Chuaqui, P. Duren, B. Osgood}, Schwarzian derivative criteria for valence of analytic and harmonic mappings, {\it Math. Proc. Cambridge Philos. Soc.}, {\bf 143}(2) (2007) ,  473-486.

\bibitem{Chuaqui-Duren-Osgood-2008}
{\sc M. Chuaqui, P. Duren, B. Osgood}, Schwarzian derivatives and uniform local univalence, {\it Comput. Methods Funct. Theory}, {\bf 8} (2008), 21-34.

\bibitem{Chuaqui-Hernandez-Martin-2017}
{\sc M. Chuaqui, R. Hern{\'a}ndez, M. J. Mart{\'\i}n}. Affine and linear invariant families of harmonic mappings, {\it Math. Ann.}, {\bf 367}(3) (2017), 1099-1122.

\bibitem{Chen-Gauthier-Hengartner-2000}
{\sc H. Chen, P.M. Gauthier,W.  Hengartner}, Bloch constants theorem for planar harmonic mappings, {\it Proc. Amer. Math. Soc.}, {\bf 128}(11) (2000), 3231-3240.

\bibitem{Clunie-Sheil-1984}
{\sc J. Clunie, T. Sheil-Small}, Harmonic univalent functions, {\it Ann. Acad. Sci. Fenn. Math.}, {\bf 9} (1984), 3-25.

\bibitem{Colonna-1989}
{\sc F. Colonna}, The Bloch constant of bounded harmonic mappings, {\it Indiana Univ. Math. J.}, {\bf 38} (1989), 829-840.

\bibitem{Duren-2004}
{\sc P. Duren}, Harmonic Mapping in the Plane, Cambridge University Press, New York, (2004).

\bibitem{Goodman-1983}
{\sc A. W. Goodman}, Univalent functions, Vols I and II,  Mariner Pub. Co., (1983).

\bibitem{GRAF-2016}
{\sc S. Yu. Graf}, On the schwarzian norm of harmonic mappings, {\it Probl. Anal. Issues Anal.}, {\bf 5}(2) (2016), 20-32.



\bibitem{Hallenbeck-MacGregor-1974}
{\sc D. J. Hallenbeck, T. H. MacGregor}, Subordination and extreme-point theory, {\it Pacific J. Math.}, {\bf 50}  (1974), 455-468.


\bibitem{Hallenbeck-MacGregor-1984}
{\sc D. J. Hallenbeck, T. H. MacGregor}, Linear problems and convexity techniques in geometric function theory, Pitman Publishing, (1984).


\bibitem{Hernandez-Martin-2015}
{\sc R. Hern{\'a}ndez, M. J. Mart{\'\i}n},  Pre-Schwarzian and Schwarzian derivatives of harmonic mappings, {\it J. Geom. Anal.}, {\bf 25}(1)  (2015), 64-91.

\bibitem{Kanas-Maharana-Prajapat-2019}
{\sc S. Kanas, S. Maharana, J.K. Prajapat}, Norm of the pre-Schwarzian derivative, Bloch's constant and coefficient bounds in some classes of harmonic mappings, {\it J. Math. Anal. Appl.}, {\bf 474}(2) (2019), 931-943.


\bibitem{Kanas-Smet-2014}
{\sc S. Kanas, D. Klimek-Sm\c{e}t}, Harmonic mapping related to functions with bounded boundary rotation and norm of the pre-Schwarzian derivative, {\it Bull. Korean Math. Soc.}, {\bf 51}(3)  (2014), 803--812.

\bibitem{Kruas-1932}
{\sc W. Kraus}, Uber den Zusammenhang eigner Characterstiken eines einfach zusammenhangenden Bereiches mit der Kreisabbildung, {\it Mitt. Math. Sem. Giessen}, {\bf 21}  (1932),  1-28.

\bibitem{Lewy-1936}
{\sc H. Lewy}, On the non-vanishing of the Jacobian in certain one-to-one mappings, {\it Bull. Amer. Math. Soc.}, {\bf 42}(10) (1936), 689-692.

\bibitem{Liu-Ponnusamy-2018}
{\sc G. Liu, S. Ponnusamy}, Uniformly locally univalent harmonic mappings associated with the pre-Schwarzian norm, {\it Indag. Math.} {\bf 29}(2) (2018), 752-778.

\bibitem{Maharan-Prajapat-Srivastava-2017}
{\sc S. Maharana, J. K. Prajapat, H. M. Srivastava}, The radius of convexity of partial sums of convex functions in one direction, {\it Proc. Nat. Acad. Sci. India Sect. A}, {\bf 87}(2) (2017), 215-219.

\bibitem{Nehari-1949}
{\sc Z. Nehari}, The Schwarzian derivative and schlicht functions, {\it Bull. Amer. Math. Soc.} {\bf 55}(6) (1949), 545-551.

\bibitem{Obradovic-Ponnusamy-Wirths-2013}
{\sc M. Obradovi{\'c}, S. Ponnusamy, K-J. Wirths}, Coefficient characterizations and sections for some univalent functions, {\it Sib. Math. J.}, {\bf 54}(4) (2013), 679-696.

\bibitem{Pinchuk-1968}
{\sc B. Pinchuk}, On starlike and convex functions of order $\alpha,$ {\it Duke Math. J.}, {\bf 35}(4) (1968), 721-734.

\bibitem{Pommerenke-1975}
{\sc C. Pommerenke, G. Jensen}, Univalent functions, G{\"o}ttingen Vandenhoeck und Ruprecht (1975).

\bibitem{Ponnusamy-Sahoo-Yanagihara-2014}
{\sc S. Ponnusamy, S. K. Sahoo, H. Yanagihara}, Radius of convexity of partial sums of functions in the close-to-convex family, {\it Nonlinear Anal.}, {\bf 95} (2014), 219-228.

\bibitem{Ponnusamy-Kaliraj-2014}
{\sc S. Ponnusamy, A. Sairam Kaliraj}, Univalent harmonic mappings convex in one direction, {\it Anal. Math. Phys.} {\bf 4}(3) (2014), 221-236.

\bibitem{Prajapat-Manivannan-Maharana-2020}
{\sc J.K. Prajapat, M. Manivannan, S. Maharana}, Harmonic mappings with analytic part convex in one direction, {\it J. Anal.}, {\bf 28}(4) (2020), 961-972.

\bibitem{Rajbala-Prajapat-2021}
{\sc Rajbala, J.K. Prajapat}, Harmonic mappings with fixed analytic part, {\it Probl. Anal. Issues. Anal.}, {\bf 10}(28) (2021), 69-86.

\bibitem{Umezawa-1952}
{\sc T. Umezawa}, Analytic functions convex in one direction, \textit{J. Math. Soc. Japan}, {\bf 4}(2) (1952), 194-202.


\bibitem{Yamashita-1976}
{\sc S. Yamashita}, Almost locally univalent functions, \textit{Monatsh. Math.}, {\bf 81}(3) (1976), 235-240.


\end{thebibliography}
\end{document}